\numberwithin{equation}{section}
\def\sO{{\mathscr O}}
\newcommand{\CC}{\mathbb{C}}
\newcommand{\PP}{\mathbb{P}}
\newcommand{\ZZ}{\mathbb{Z}}
\newcommand{\cal}{\mathcal}
\def\cE{{\cal E}}
\def\cM{{\cal M}}
\def\cW{{\cal W}}
\def\cW{{\cal W}}
\def\fC{\mathfrak{C}}
\def\fE{\mathfrak{E}}
\def\fN{\mathfrak{N}}
\def\loc{\mathrm{loc}}
\let\fE=\cE
\def\and{\quad{\rm and}\quad}
\def\lra{\longrightarrow }
\def\mapright#1{\,\smash{\mathop{\lra}\limits^{#1}}\,}
\let\sub=\subset
\DeclareMathOperator{\Tor}{Tor}
\newtheorem{prop}{Proposition}[section]
\newtheorem{theo}[prop]{Theorem}
\newtheorem{lemm}[prop]{Lemma}
\newtheorem{coro}[prop]{Corollary}
\newtheorem{rema}[prop]{Remark}
\newtheorem{exam}[prop]{Example}
\newtheorem{defi}[prop]{Definition}
\newtheorem{assu}[prop]{Assumption}
\def\beq{\begin{equation}}
\def\eeq{\end{equation}}
\def\virt{^{\mathrm{vir}} }
\def\bbL{\mathbb{L} }
\def\DM{Deligne-Mumford }
\def\Po{\PP^1}
\def\bl{\bigl(}
\def\br{\bigr)}
\def\bfc{\mathfrak{C} }
\def\ch{\mathrm{ch} }
\def\td{\mathrm{td} }
\def\coker{\mathrm{coker} }
\def\tE{\widetilde{E} }
\def\tC{\widetilde{C} }
\def\tX{\widetilde{X} }
\def\tF{\widetilde{F} }
\def\tsi{\tilde{\sigma} }
\def\trho{\tilde{\rho} }
\title{Localizing virtual structure sheaves by cosections}
\author{Young-Hoon Kiem}
\address{Department of Mathematics and Research Institute of Mathematics, Seoul National University, Seoul 08826, Korea} \email{kiem@snu.ac.kr}
\author{Jun Li}
\address{Department of Mathematics,  Stanford University, CA 94305, USA}
\email{jli@math.stanford.edu}
\thanks{YHK was partially supported by Samsung Science and Technology Foundation grant SSTF-BA1601-01; JL was partially supported by National Science Foundation, DMS-1601211}
\date{2018.9.1}
\begin{document}
\begin{abstract} 
We construct a cosection localized virtual structure sheaf when a \DM stack is equipped with a perfect obstruction theory and a cosection of the obstruction sheaf. 
\end{abstract}
\maketitle  

\section{Introduction}\label{sec1}

According to Schubert \cite{Klei}, enumerative geometry is about finding the number of geometric figures of fixed type satisfying certain given conditions. A typical way of solving an enumerative problem consists of constructing a moduli space parameterizing all geometric figures of fixed type and then finding the number of intersection points of the subsets defined by the given conditions. The latter part is called an intersection theory and a solid mathematical theory was developed by Fulton and MacPherson \cite{Fulton} in 1970s through a systematic use of normal cones and Gysin maps. 
In particular, the intersection rings for smooth schemes were rigorously defined and Riemann-Roch theorems were established for schemes. Fulton's intersection theory was updated for \DM stacks by Vistoli \cite{Vistoli} and for Artin stacks by Kresch \cite{Kres}. 

However moduli spaces are often very singular and do not behave well under deformation. To deal with this issue, the theory of virtual fundamental classes was developed in 1990s by Li-Tian \cite{LiTi} and Behrend-Fantechi \cite{BeFa}. A \DM stack $X$ has its intrinsic normal cone $\bfc_X$ which is locally defined as the quotient stack $C_{U/M}/{T_M|_U}$ for an \'etale $U\to X$ and a closed embedding $U\hookrightarrow M$ into a smooth $M$. When $\bfc_X$ is embedded into a vector bundle stack $\fE_X$ which is locally the quotient $E_1/E_0$ for vector bundles $E_1, E_0$, the virtual fundamental class is defined as the intersection 
$$[X]\virt=0^!_{\fE_X}[\bfc_X]\in A_*(X)$$
of $\bfc_X$ with the zero section of $\fE_X$. Many nice properties such as deformation invariance can be deduced under reasonable assumptions \cite{KKP} and since 1995, important enumerative invariants in algebraic geometry have been constructed as integrals on the virtual cycles $[X]\virt$ on suitable moduli spaces $X$, including the Gromov-Witten, Donaldson-Thomas, Pandharipande-Thomas invariants and more. 

The computation of these virtual invariants is known to be very hard because there are not many tools to handle the virtual cycle $[X]\virt$. When there is an action of $\CC^*$ on $X$, under suitable assumptions, the virtual cycle is localized to the fixed locus $X^{\CC^*}$ by the torus localization formula (cf. \cite{GrPa})
$$[X]\virt=\frac{[X^{\CC^*}]\virt}{e(N\virt)}$$
which has been most effective for actual computations so far. Recently, another localization of the virtual cycle $[X]\virt$ was discovered \cite{KLc} which also turned out to be quite useful \cite{CK, CLp, CLL, CLLL, Clad, FJR, GS1, GS2, HLQ, JiTh, KL1, KL2, KoTh, MPT, PT}. It says that when there is a morphism $\sigma:\fE_X\to \sO_X$, called a cosection, the virtual cycle is localized to the zero locus $X(\sigma)$ of $\sigma$, i.e. we have a class $[X]\virt_\loc\in A_*(X(\sigma))$ which equals $[X]\virt$ when pushed forward to $X$. The two localizations were combined in \cite{CKL}.

Recently there arose a demand to lift the theory of virtual cycles in Chow groups to algebraic K-groups with applications towards physics and geometric representation theory. In \cite{BeFa, YPLe}, the K-theoretic virtual fundamental class was defined as 
$$[\sO_X\virt]=0^!_{E_1}[\sO_{C_1}]=[\sO_X\otimes^L_{\sO_{E_1}}\sO_{C_1}] \in K_0(X)$$
and called the \emph{virtual structure sheaf} of $X$.
Here 
$C_1=\bfc_X\times_{\fE_X}E_1$ when $\fE_X=E_1/E_0$ is a global resolution by vector bundles. 
In order to lift some of the results on the virtual cycles and invariants to the setting of algebraic K-groups, it is necessary to develop K-theoretic techniques to handle the virtual structure sheaf $[\sO_X\virt]$ such as the cosection localization. In this paper, we prove the following.

\begin{theo} (Theorem \ref{n8}, Proposition \ref{3.65}, Proposition \ref{3.66})
Let $X$ be a \DM stack equipped with a perfect obstruction theory $\phi:E\to \bbL_X$ and a cosection $\sigma:h^1(E^\vee)\to \sO_X$ whose zero locus is denoted by $X(\sigma)$. Assume $E$ admits a global resolution $[E^{-1}\to E^0]$ by locally free sheaves (e.g. if $X$ is quasi-projective). Then there is a cosection localized  virtual structure sheaf 
$$[\sO\virt_{X,\loc}]\in K_0(X(\sigma))$$
such that $\imath_*[\sO\virt_{X,\loc}]=[\sO\virt_X]\in K_0(X)$ where $\imath:X(\sigma)\to X$ denotes the inclusion. Moreover, 
$[\sO\virt_{X,\loc}]$ is independent of the choice of the resolution $[E^{-1}\to E^0]$ and is
deformation invariant. 
\end{theo}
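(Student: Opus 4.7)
The strategy is to adapt to K-theory the cosection localization machinery of \cite{KLc}. Write $E_1=(E^{-1})\dual$ and $E_0=(E^0)\dual$, so that $\fE_X=[E_1/E_0]$ presents the obstruction bundle stack and $\bfc_X$ pulls back to a closed subcone $C_1\sub E_1$; by construction $[\sO\virt_X]=[\sO_X\otimes^L_{\sO_{E_1}}\sO_{C_1}]\in K_0(X)$. The cosection $\sigma:h^1(E\dual)\to\sO_X$ factors through the quotient $E_1\to h^1(E\dual)$, hence lifts to $\bar\sigma:E_1\to\sO_X$, equivalently a section of $E_1\dual$. The fundamental geometric input from \cite{KLc} is that on $U:=X\setminus X(\sigma)$ the subcone $C_1|_U$ is contained in the rank $\rank E_1-1$ kernel bundle $E_1(\sigma):=\ker(\bar\sigma|_U)\sub E_1|_U$.

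For the main construction, let $p:E_1\to X$ be the projection and $s\in\Gamma(E_1,p^\ast E_1)$ the tautological section, and set $\tilde\sigma:=\bar\sigma\circ s\in\Gamma(E_1,\sO)$. On $\wedge^\bullet p^\ast E_1\dual$ form the odd operator
$$ d:=\iota_s+p^\ast\bar\sigma\wedge(-), $$
which satisfies $d^2=\tilde\sigma\cdot\id$, exhibiting a matrix factorization of $\tilde\sigma$ on $E_1$. The containment above, together with the vanishing of $\bar\sigma$ over $X(\sigma)$, yields $C_1\sub V(\tilde\sigma)$, so $(\wedge^\bullet p^\ast E_1\dual,d)|_{C_1}$ is a genuine $\ZZ/2$-graded complex of locally free sheaves on $C_1$; moreover on $C_1\cap p^{-1}(U)$ the nowhere-zero section $p^\ast\bar\sigma$ yields a contracting homotopy. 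From this datum I extract a bounded complex $\sA^\bullet$ of locally free $\sO_{E_1}$-modules (for instance by truncating, or by taking the total complex of a double complex mixing $\iota_s$ and $p^\ast\bar\sigma\wedge(-)$) whose derived restriction to $C_1$ has cohomology supported on $p^{-1}(X(\sigma))\cap C_1$, and define
$$ [\sO\virt_{X,\loc}]:=p_\ast\bl \sA^\bullet\otimes^L_{\sO_{E_1}}\sO_{C_1}\br\in K_0(X(\sigma)). $$

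The comparison $\imath_\ast[\sO\virt_{X,\loc}]=[\sO\virt_X]$ follows because, at the level of $K_0(X)$, forgetting the $p^\ast\bar\sigma$-twist turns $\sA^\bullet$ back into the ordinary Koszul resolution of the zero section $X\hookrightarrow E_1$, so both sides reduce to $[\sO_X\otimes^L_{\sO_{E_1}}\sO_{C_1}]$. Independence of the resolution $[E^{-1}\to E^0]$ proceeds as in \cite{BeFa,YPLe}: for two resolutions, pass to a common refinement, extend the cosection compatibly, and observe that the two twisted complexes agree up to tensoring with trivial Koszul factors whose classes cancel in $K_0$. Deformation invariance is obtained by running the entire construction for a relative perfect obstruction theory and a relative cosection over a smooth base, then applying the standard specialization argument of \cite{KKP}.

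The principal technical obstacle is the K-theoretic extraction in the main construction: the matrix factorization $d=\iota_s+p^\ast\bar\sigma\wedge(-)$ is naturally $\ZZ/2$-periodic, so producing a bounded complex of locally free sheaves whose $K_0$ class recovers $[\sO\virt_X]$ under pushforward while remaining acyclic outside $p^{-1}(X(\sigma))$ requires a careful local bookkeeping. This is the K-theoretic counterpart of, and subtler than, the cone-deformation step in the Chow-theoretic proof \cite{KLc}; once it is in place, the three listed properties follow by arguments already familiar from \cite{KLc, CKL, KKP}.
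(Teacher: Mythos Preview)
Your approach is genuinely different from the paper's. The paper never touches matrix factorizations; instead it constructs a \emph{cosection localized Gysin map}
\[
0^!_{E_1,\sigma}:K_0(E_1(\sigma))\lra K_0(X(\sigma))
\]
by blowing up $X$ along $X(\sigma)$: on the blowup $\rho:\tX\to X$ with exceptional divisor $D$ one has a short exact sequence $0\to\tE_1'\to\rho^\ast E_1\to\sO_{\tX}(-D)\to 0$, and for $[F]\in K_0(E_1(\sigma))$ one sets
\[
0^!_{E_1,\sigma}[F]=\rho'_\ast\bl D^\vee\cdot 0^!_{\tE_1'}[\tF]\br+0^!_{E_1|_{X(\sigma)}}R_{F,\tF}
\]
for a suitable lift $\tF$ and an explicit correction term $R_{F,\tF}$ measuring $[F]-[R\trho_\ast\tF]$. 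One then defines $[\sO\virt_{X,\loc}]:=0^!_{E_1,\sigma}[\sO_{C_1}]$; independence of resolution follows by comparing two presentations through a common surjective refinement, and deformation invariance is proved via the Kim--Kresch--Pantev double deformation space. No periodicity or ``extraction'' issue ever arises.

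Your proposal has a real gap precisely where you flag it. You introduce the $\ZZ/2$-periodic operator $d=\iota_s+p^\ast\bar\sigma\wedge(-)$ on $\wedge^\bullet p^\ast E_1\dual$, observe that $d^2=0$ on $C_1$ and that the resulting complex is acyclic over $U$, and then write ``From this datum I extract a bounded complex $\sA^\bullet$\ldots''. But that extraction \emph{is} the construction. Truncating a $2$-periodic complex is not well defined, and your ``total complex of a double complex'' suggestion is not made precise. If instead you simply take $[H^{\mathrm{ev}}]-[H^{\mathrm{odd}}]$ of the folded complex on $C_1$ (which is the natural candidate), you do obtain a class supported on the zero section over $X(\sigma)$; however, your comparison argument ``forgetting the twist turns $\sA^\bullet$ back into the ordinary Koszul'' only shows that the two classes agree in $K_0(C_1)$ (both equal $\sum(-1)^i[\wedge^i p^\ast E_1\dual\otimes\sO_{C_1}]$). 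That is not enough: you need equality in $K_0(X)$, and the pushforward $K_0(X)\to K_0(C_1)$ along the zero section is not in general injective (composing with $(\pi^\ast)^{-1}$ amounts to multiplication by $\lambda_{-1}(E_1\dual)$, which can be a zero-divisor). So the step $\imath_\ast[\sO\virt_{X,\loc}]=[\sO\virt_X]$ is not established.

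Your instinct is not wrong: the twisted-Koszul/matrix-factorization picture is essentially Chiodo's construction, and the paper in fact proves in \S\ref{SLG} that Chiodo's $K$-class agrees with $[\sO\virt_{X,\loc}]$. But the paper does so \emph{after} having the blowup definition in hand, by reducing to the split case $\tE_1=\tE_1'\oplus\sO_{\tX}(-D)$ on the blowup. To make your route self-contained you would need either an injectivity statement for the relevant pushforward or a direct filtration/deformation argument linking the cohomology of $(\wedge^\bullet,\iota_s)$ and $(\wedge^\bullet,\iota_s+\bar\sigma\wedge)$ as classes in $K_0(X)$, neither of which you supply.
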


By \cite[\S4]{KLc}, the cone $C_1\subset E_1$ has (reduced) support in 
$$E_1(\sigma)=E_1|_{X(\sigma)}\cup \ker(\sigma:E_1|_U\to  \sO_U)$$
where $U=X-X(\sigma)$. We define the cosection localized Gysin map (cf. Theorem \ref{n10})
$$0^!_{E_1,\sigma}:K_0(E_1(\sigma))\to K_0(X(\sigma))$$ and 
the cosection localized virtual structure sheaf $[\sO\virt_{X,\loc}]$ is defined as
$$[\sO\virt_{X,\loc}]=0^!_{E_1,\sigma}[\sO_{C_1}]\in K_0(X(\sigma)).$$

Using the cosection localized virtual structure sheaf $[\sO\virt_{X,\loc}]\in K_0(X(\sigma))$, we can define the cosection localized virtual Euler characteristic, even when $X$ is not proper, as long as $X(\sigma)$ is proper. 
\begin{defi}
The cosection localized \emph{virtual Euler characteristic} of a class $\beta\in K^0(X)$ is defined as
\[\chi\virt_\loc (X,\beta)=\chi(X(\sigma),\beta\cdot\sO\virt_{X,\loc})=\sum_i (-1)^i \dim H^i(X(\sigma),\beta\cdot \sO\virt_{X,\loc}).\]
\end{defi}

As an application, we lift the results of \cite{CLp, CLL} to the K-theoretic setting. In particular, we define the K-theoretic Fan-Jarvis-Ruan-Witten invariant for narrow sector (which doesn't involve the matrix factorization machinery of Polishchuk and Vaintrob)  as the Euler characteristic of the virtual structure sheaf on the moduli space of spin curves with sections. 

\medskip

The layout of this paper is as follows. In \S2, we collect useful facts. In \S3, we prove that the virtual structure sheaf vanishes if there is a surjective cosection $\sigma: Ob_X\to \sO_X$. In \S4, we construct the cosection localized Gysin map $0^!_{E_1,\sigma}$. In \S5, we define the cosection localized virtual structure sheaf $[\sO\virt_{X,\loc}]$ and prove key properties. In \S6, we apply the cosection localized virtual structure sheaf to construct a K-theoretic FJRW invariant and discuss the K-theoretic Landau-Ginzburg/Calabi-Yau correspondence. 

\medskip

In this paper, all schemes or \DM stacks are separated and Noetherian of finite type over the complex number field $\CC$. When $X\hookrightarrow Y$ is a closed embedding, its normal cone is denoted by $C_{X/Y}$. When $f:X\to Y$ is a morphism and $F$ is a coherent sheaf on $Y$, $F|_Y$ denotes the (underived) pullback $f^*F$. 

\medskip

We thank Richard Thomas for useful comments including Remark \ref{3.rt}.
\bigskip

\def\lred{_{\mathrm{red}} }
\def\Tor{\mathrm{Tor} }
\section{Preliminaries}\label{sec2}
In this section, we collect useful facts about algebraic K-groups and cosection localization.

\subsection{Algebraic K-theory}\label{S2.1}

For a \DM stack $X$, $K_0(X)$ (resp. $K^0(X)$) denotes the Grothendieck group generated by coherent sheaves (resp. locally free sheaves) on $X$ with relations $[F]=[F']+[F'']$ whenever we have an exact sequence $0\to F'\to F\to F''\to 0.$

For a projective morphism $f:X\to Y$, the pushforward
$$f_*:K_0(X)\lra K_0(Y)$$
is defined by the right derived direct image $Rf_*$ as
$$f_*[F]=[Rf_*F]=\sum_i(-1)^i[R^if_*F].$$
The pullback $$f^*:K_0(Y)\lra K_0(X)$$
for a morphism $f:X\to Y$ is defined by the left derived inverse image $Lf^*$ as
$$f^*[G]=[Lf^*G]=[\sO_X\otimes^L_{f^{-1}\sO_Y}f^{-1}G]=\sum_i(-1)^i[\Tor_i^{f^{-1}\sO_Y}(\sO_X,f^{-1}G)]$$
when the sum is finite, i.e. $\Tor_i^{f^{-1}\sO_Y}(\sO_X,f^{-1}G)\ne 0$ for only finitely many $i$.  
Tensor product of locally free sheaves makes $K^0(X)$ a commutive ring and $K_0(X)$ a module over $K^0(X)$. 

For a \DM stack $X$, let $X\lred$ be the reduced stack of $X$ and $\imath:X\lred\to X$ denote the inclusion. 
\begin{prop}\label{n1}
For a \DM stack $X$, $\imath_*:K_0(X\lred)\to K_0(X)$ is an isomorphism. \end{prop}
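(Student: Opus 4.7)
The plan is to prove this by a d\'evissage argument using the nilpotent filtration; this is essentially a special case of Quillen's d\'evissage theorem. Since $X$ is Noetherian, the ideal sheaf $\sI\subset\sO_X$ of the closed embedding $X\lred\hookrightarrow X$ satisfies $\sI^N=0$ for some $N\ge 1$, and this nilpotency drives both halves of the proof.

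For surjectivity of $\imath_*$, I would attach to any coherent sheaf $F$ on $X$ its $\sI$-adic filtration $F\supset\sI F\supset\sI^2 F\supset\cdots\supset\sI^N F=0$. Each successive quotient $\sI^k F/\sI^{k+1}F$ is annihilated by $\sI$, hence is (the pushforward of) a coherent sheaf on $X\lred$, so
\[[F]=\sum_{k=0}^{N-1}\imath_*[\sI^k F/\sI^{k+1}F]\in K_0(X).\]

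For injectivity I would construct a left inverse
\[\phi\colon K_0(X)\lra K_0(X\lred),\qquad \phi([F])\defeq\sum_{k=0}^{N-1}[\sI^k F/\sI^{k+1}F].\]
Once $\phi$ is well defined, $\phi\circ\imath_*=\id$ is immediate (for $G\in\mathrm{Coh}(X\lred)$ only the $k=0$ term survives and returns $[G]$), and $\imath_*\circ\phi=\id$ is precisely the surjectivity identity above; together these give the isomorphism.

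The main obstacle is verifying that $\phi$ respects short exact sequences $0\to F'\to F\to F''\to 0$. I would argue by induction on $N$, the essential case being $N=2$. There the three comparison sequences
\begin{align*}
0 &\to F'\cap\sI F\to \sI F\to \sI F''\to 0,\\
0 &\to F'/(F'\cap\sI F)\to F/\sI F\to F''/\sI F''\to 0,\\
0 &\to (F'\cap\sI F)/\sI F'\to F'/\sI F'\to F'/(F'\cap\sI F)\to 0
\end{align*}
combine to yield the required additivity $\phi([F])=\phi([F'])+\phi([F''])$ in $K_0(X\lred)$: the correction term $(F'\cap\sI F)/\sI F'$ lies in $\mathrm{Coh}(X\lred)$ because $\sI(F'\cap\sI F)\subset\sI^2 F=0$, and the three alternating sums cancel. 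The inductive step for general $N$ is handled by factoring $\imath$ through the intermediate closed substack $V(\sI^{N-1})\hookrightarrow X$, applying the $N=2$ case to the outer embedding, and applying the inductive hypothesis to $V(\sI^{N-1})$.
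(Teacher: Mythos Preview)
Your proof is correct and follows the same approach as the paper: both use the $\sI$-adic filtration to show surjectivity and then define the same candidate inverse $\phi([F])=\sum_k[\sI^kF/\sI^{k+1}F]$, with the additivity of $\phi$ being the only point requiring verification. The paper simply declares this check ``a straightforward exercise,'' whereas you supply the details via the $N=2$ case and induction; your argument for $N=2$ is fine, and the inductive factorisation through $V(\sI^{N-1})$ works since $(\sI^{N-1})^2\subset\sI^N=0$.
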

\begin{proof}
Let $I$ be the ideal sheaf defining $X\lred$ in $X$, i.e. it is the ideal of nilpotents. Since $X$ is Noetherian by assumption, $I^n=0$ for some $n>0$. Hence, if $F$ is a coherent sheaf on $X$, 
\[ [F]=\sum_{r=0}^{n-1}[I^rF/I^{r+1}F]\in K_0(X) \]
and $I^rF/I^{r+1}F$ are coherent sheaves on $X\lred$. Hence $\imath_*$ is surjective. In fact, the assignment
\beq\label{n2} [F]\mapsto \sum_{r=0}^{n-1}[I^rF/I^{r+1}F]\eeq
is obviously a right inverse of $\imath_*$ and hence $\imath_*$ is injective. 
It is a straightforward exercise to check that \eqref{n2} is a homomorphism of $K$-groups.
\end{proof}

%

\begin{prop}\label{n5}\cite[Lemma 17]{BoSe} For a \DM stack $X$, $K_0(X)$ is generated by the classes $[\sO_Z]$ for integral closed substacks $Z$ in $X$.
\end{prop}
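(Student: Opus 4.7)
The plan is to proceed by Noetherian induction on the support of a coherent sheaf, reducing to comparing a coherent sheaf with a free module on an irreducible component. Let $G\subseteq K_0(X)$ denote the subgroup generated by the classes $[\sO_Z]$ as $Z$ ranges over integral closed substacks of $X$; since $K_0(X)$ is generated by classes $[F]$ of coherent sheaves, it suffices to show $[F]\in G$ for every coherent $F$.

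First I would reduce to reduced support. Exactly as in the proof of Proposition~\ref{n1}, the filtration $F\supseteq \sI F\supseteq \sI^2 F\supseteq\cdots$, with $\sI$ the ideal of $\mathrm{supp}(F)_{\mathrm{red}}\subseteq X$, terminates because $X$ is Noetherian and expresses $[F]$ as a finite sum of classes of coherent sheaves annihilated by $\sI$. Consequently I may assume $F$ is a coherent sheaf on the reduced closed substack $Y\defeq \mathrm{supp}(F)_{\mathrm{red}}$. I then run Noetherian induction on $Y$: if $Y=\emptyset$ then $F=0$ and $[F]=0\in G$; otherwise assume the conclusion for all coherent sheaves whose support is strictly contained in $Y$.

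Next, I would choose an irreducible component $Y_1$ of $Y$, with generic point $\eta$, and set $r=\dim_{k(\eta)}F_\eta$, the generic rank of $F$ along $Y_1$ (finite because $F$ is coherent and $\sO_{Y,\eta}=k(\eta)$ is a field, by reducedness). The key construction is a morphism $\varphi:\sO_{Y_1}^{\oplus r}\to F$ of coherent sheaves on $Y$ that is an isomorphism at $\eta$. Granted such $\varphi$, both $\ker\varphi$ and $\coker\varphi$ vanish on a nonempty open substack of $Y_1$ and are zero at the generic points of the other components of $Y$, so their supports are proper closed substacks of $Y$. The inductive hypothesis then gives $[\ker\varphi],[\coker\varphi]\in G$, and additivity yields
$$[F]=r\cdot [\sO_{Y_1}]+[\coker\varphi]-[\ker\varphi]\in G,$$
since $[\sO_{Y_1}]\in G$ by integrality of $Y_1$.

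The main obstacle is the construction of $\varphi$ in the Deligne--Mumford stack setting. For a scheme one simply spreads out a basis of $F_\eta$ from the generic point to a Zariski neighborhood; on a DM stack the analogous argument is only étale-local, and one must descend the resulting sections past the finite inertia group at $\eta$ to obtain a global morphism on $Y_1$. This descent, which can be carried out by an averaging argument over the automorphism group of $\eta$ (after possibly passing to a suitable tensor power or to the rigidification of $Y_1$), is precisely the content of Borisov--Serp\'e \cite[Lemma~17]{BoSe}; once that step is granted, the Noetherian induction above completes the proof.
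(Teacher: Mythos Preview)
Your d\'evissage strategy---reduce to reduced support via Proposition~\ref{n1} and then run Noetherian induction on the support, peeling off an irreducible component $Y_1$ via a map $\sO_{Y_1}^{\oplus r}\to F$ that is an isomorphism at the generic point---is precisely the route the paper indicates: it reduces to the reduced case and then asserts that the argument of \cite[Lemma~17]{BoSe}, by induction on the dimension of the support, carries over verbatim. (A minor correction: \cite{BoSe} is Borel--Serre, not ``Borisov--Serp\'e''.)

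The step you single out as ``the main obstacle'' is, however, a genuine gap, and your proposed remedies do not close it. If the generic point $\eta$ of $Y_1$ has nontrivial inertia group $G$, then $F_\eta$ is naturally a $G$-representation, and any morphism $\sO_{Y_1}^{\oplus r}\to F$ lands, at $\eta$, in the invariants $(F_\eta)^G$; hence $\varphi$ can be an isomorphism at $\eta$ only when $F_\eta$ is $G$-trivial, which it need not be. Averaging over $G$ only produces invariants (possibly zero), passing to tensor powers does not express $[F]$ itself, and rigidification changes the stack rather than yielding sections over $Y_1$. In fact the assertion fails as written for \DM stacks with nontrivial generic stabilizer: for $X=BG$ with $G$ a nontrivial finite group one has $K_0(BG)\cong R(G)$, the only nonempty integral closed substack is $BG$ itself, and $\ZZ\cdot[\sO_{BG}]=\ZZ\cdot[\mathrm{triv}]\subsetneq R(G)$. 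Thus your argument---and equally the paper's sketch, which simply invokes the scheme-theoretic Borel--Serre proof---goes through only under an additional hypothesis such as generically trivial stabilizers, which should be made explicit.
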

\begin{proof}
By Proposition \ref{n1}, we may assume $X$ is reduced and consider only sheaves supported on reduced stacks. Now the proof of \cite[Lemma 17]{BoSe}, by induction on the dimension of the support, also proves the proposition.
\end{proof}

\begin{prop}\label{n6} \cite[Proposition 7]{BoSe}
Let $X$ be a \DM stack and $Z$ be a closed substack. Let $U=X-Z$. Then the inclusions
\[ \xymatrix{Z\ar@{^(->}[r]^\imath & X & U\ar@{_(->}[l]_{\jmath} }\]
induce a complex
\beq\label{n7} K_0(Z)\mapright{\imath_*} K_0(X) \mapright{\jmath^*} K_0(U)\lra 0\eeq
which is exact at $K_0(U)$. 
If every coherent sheaf on $U$ extends to a coherent sheaf on $X$ (e.g. $X$ is a scheme), 
then \eqref{n7} is exact. 
\end{prop}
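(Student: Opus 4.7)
The plan is to establish the three parts of the statement in turn: that \eqref{n7} is a complex, surjectivity of $\jmath^*$, and exactness at $K_0(X)$ under the extension hypothesis. The complex property is immediate: for any coherent $F$ on $Z$, the sheaf $\imath_*F$ is set-theoretically supported on $Z$, so $(\imath_*F)|_U=0$ and $\jmath^*\imath_*[F]=0$. For surjectivity of $\jmath^*$, I would invoke Proposition \ref{n5}: $K_0(U)$ is generated by the classes $[\sO_V]$ for $V$ an integral closed substack of $U$. Taking $\bar V$ to be the closure of $V$ in $X$, which is again an integral closed substack, gives $\sO_{\bar V}|_U\cong\sO_V$, so $\jmath^*[\sO_{\bar V}]=[\sO_V]$.

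For exactness at $K_0(X)$ I would first record two auxiliary observations. \emph{(a)} Any coherent $F$ on $X$ with $\mathrm{supp}(F)\subseteq Z$ has $[F]\in\image\imath_*$, by filtering $F\supseteq I_ZF\supseteq I_Z^2F\supseteq\cdots$ as in the proof of Proposition \ref{n1}. \emph{(b)} If $H,H'$ are coherent on $X$ and $\phi\colon H|_U\xrightarrow{\sim}H'|_U$ is an isomorphism, then $[H]-[H']\in\image\imath_*$: the graph of $\phi$ is a coherent subsheaf of $(H\oplus H')|_U$, and extending it to a coherent subsheaf $\tilde\Gamma\subseteq H\oplus H'$ on $X$ yields two projections $\tilde\Gamma\to H$, $\tilde\Gamma\to H'$ that are isomorphisms over $U$; their kernels and cokernels are supported on $Z$, so by (a) both $[\tilde\Gamma]-[H]$ and $[\tilde\Gamma]-[H']$ lie in $\image\imath_*$, and subtracting yields the claim.

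Given $\alpha=\sum n_i[F_i]\in\ker\jmath^*$, the relation $\sum n_i\{F_i|_U\}=0$ in $K_0(U)$ unpacks, at the level of the free abelian group on isomorphism classes, as $\sum n_i\{F_i|_U\}=\sum_j\epsilon_j(\{B_j\}-\{A_j\}-\{C_j\})$ for finitely many exact sequences $0\to A_j\to B_j\to C_j\to 0$ on $U$ and integers $\epsilon_j$. I would extend each $B_j$ to a coherent $\tilde B_j$ on $X$, lift $A_j\subseteq\tilde B_j|_U$ to a coherent subsheaf $\tilde A_j\subseteq\tilde B_j$, and set $\tilde C_j=\tilde B_j/\tilde A_j$; the resulting sequences $0\to\tilde A_j\to\tilde B_j\to\tilde C_j\to 0$ are exact on $X$ and restrict to the originals on $U$. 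Substituting $[\tilde B_j]=[\tilde A_j]+[\tilde C_j]$ rewrites $\alpha$ as a combination $\sum_k m_k[H_k]$ in $K_0(X)$ satisfying the \emph{formal} identity $\sum_k m_k\{H_k|_U\}=0$ in the free abelian group on isomorphism classes. Grouping indices by isomorphism class of $H_k|_U$ produces subsums $\sum_{k\in S}m_k[H_k]$ with $\sum_{k\in S}m_k=0$; fixing $k_0\in S$, each such subsum equals $\sum_{k\in S}m_k([H_k]-[H_{k_0}])$, which lies in $\image\imath_*$ by (b). Hence $\alpha\in\image\imath_*$.

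The main technical point is observation (b) together with the lifting of exact sequences: both require extending a coherent \emph{subsheaf} of a given coherent sheaf on $X$, not merely an individual coherent sheaf on $U$. This stronger form is classical for schemes (Grothendieck, EGA I 9.4.7) and in the Noetherian setting follows from the extension hypothesis because any quasi-coherent subsheaf of a coherent sheaf on a Noetherian DM stack is automatically coherent.
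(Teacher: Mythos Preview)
Your proposal is correct and follows the same route as the paper: the paper proves surjectivity of $\jmath^*$ exactly as you do (via Proposition~\ref{n5} and closures of integral substacks) and for exactness at $K_0(X)$ simply defers to the proof of \cite[Proposition~7]{BoSe}, which is precisely the argument you have written out---lift each short exact sequence on $U$ to one on $X$, cancel so that the remaining formal sum vanishes in the free group on isomorphism classes over $U$, and finish with the graph trick (your observation~(b)).

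One minor point of phrasing: in your last paragraph you say subsheaf extension ``follows from the extension hypothesis.'' In fact, once $G$ is a coherent sheaf on $X$, extending a coherent subsheaf $A\subseteq G|_U$ to a coherent subsheaf of $G$ does not need the extension hypothesis at all in the Noetherian setting: pull back $\jmath_*A\subseteq\jmath_*(G|_U)$ along the adjunction $G\to\jmath_*(G|_U)$; the result is a quasi-coherent subsheaf of $G$ (since $\jmath_*$ preserves quasi-coherence for an open immersion of Noetherian stacks), hence coherent. The extension hypothesis is genuinely needed only to produce $\tilde B_j$ from $B_j$. This does not affect the validity of your argument.
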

\begin{proof}
$\jmath^*$ is surjective by Proposition \ref{n5} since $\jmath^*[\sO_{\bar Z}]=[\sO_Z]$ for an integral $Z\subset U$ where $\bar Z$ denotes the closure of $Z$ in $X$. When every coherent sheaf on $U$ extends to a coherent sheaf on $X$, the exactness of \eqref{n7} is proved by the same proof of \cite[Proposition 7]{BoSe}.
\end{proof}

For instance, when $X$ has the resolution property, every sheaf of $\sO_U$-modules, where $U\sub X$ open,
can be extended to a sheaf of $\sO_X$-modules. When the Keel-Mori coarse moduli of $X$ is a separated
scheme, $X$ has the resolution property \cite{Totaro}.

\subsection{Virtual structure sheaf}\label{S2.2}

Let $X$ be a \DM stack equipped with a perfect obstruction theory $\phi:E\to \bbL_X$, i.e. $\phi$ is a morphism in the derived category of quasi-coherent sheaves on $X$ such that $h^0(\phi):h^0(E)\to h^0(\bbL_X)$ is an isomorphism, $h^{-1}(\phi):h^{-1}(E)\to h^{-1}(\bbL_X)$ is surjective, and $E$ is locally isomorphic to a 2-term complex of locally free sheaves, concentrated in degrees $-1$ and $0$. Here $\bbL_X=\tau^{\ge -1}L_X$ is the truncated cotangent complex of $X$. 

To avoid discussion on algebraic K-theory of Artin stacks, let us suppose in this paper that  
$E$ admits a global resolution by a 2-term complex $[E^{-1}\to E^0]$ of locally free sheaves on $X$. Let $[E_0\to E_1]$ denote the dual with $E_i=(E^{-i})^\vee$. By \cite{BeFa}, the intrinsic normal cone $\bfc_X$ of $X$ 
is locally the Artin stack $C_{X/M}/{T_M|_X}$ if $X\to M$ is a closed embedding into a smooth \DM stack. Here $C_{X/M}$ denotes the normal cone of $X$ in $M$. 
Then $h^1/h^0(\phi^\vee)$ embeds $\bfc_X$ into $\fE_X=E_1/E_0$. The intrinsic normal cone $\bfc_X$
lifts to a cone $$C_1:=\bfc_X\times_{\fE_X}E_1\subset E_1$$ and the virtual fundamental class is defined as 
$$[X]\virt=0^!_{\fE_X}[\bfc_X]=0^!_{E_1}[C_1].$$
Likewise, the virtual structure sheaf of $X$ is defined as the sheaf theoretic intersection
\beq\label{2.45} [\sO_X\virt]=[\sO_X\otimes^L_{\sO_{E_1}}\sO_{C_1}]=\sum_i(-1)^i [\mathrm{Tor}_i^{E_1}(\sO_X,\sO_{C_1})] \in K_0(X)\eeq
where $\otimes^L_{\sO_{E_1}}$ is the derived tensor product (cf. \cite[Remark 5.4]{BeFa}). 
If we denote the bundle projection $E_1\to X$ by $\pi$, the tautological section of $\pi^*E_1$ over $E_1$ gives us the Koszul resolution $\wedge^\cdot \pi^*E_1^\vee$ of the sheaf $\sO_X$ on $E_1$. Hence \eqref{2.45} can be rephrased as
\beq\label{2.46} [\sO_X\virt]=[\wedge^\cdot \pi^*E_1^\vee\otimes_{\sO_{E_1}} \sO_{C_1}].\eeq

\subsection{Localizing virtual fundamental classes by cosections}\label{S2.3}

Suppose the obstruction sheaf $Ob_X:=h^1(E^\vee)$ admits a cosection $\sigma:Ob_X\to\sO_X$. 
Let $X(\sigma)$ denote the locus where $\sigma$ is not surjective, i.e. the closed stubstack defined by the ideal sheaf $\sigma(Ob_X)\subset \sO_X$.  
By \cite[\S4]{KLc}, the (reduced) support of $C_1$ is contained in 
$$E_1(\sigma)=E_1|_{X(\sigma)}\cup \mathrm{ker}(\sigma:E_1\to Ob_X\to \sO_X).$$

The ordinary Gysin map $0^!_{E_1}:A_*(E_1)\to A_*(X)$ can be localized to a homomorphism (cf. \cite[\S2]{KLc})
$$0^!_{E_1,\sigma}:A_*(E_1(\sigma))\lra A_*(X(\sigma))$$
such that $\imath_*\circ 0^!_{E_1,\sigma}=0^!_{E_1}$ where $\imath:X(\sigma)\to X$ is the inclusion map. 

The cosection localized virtual cycle is then obtained by 
$$[X]\virt_\loc=0^!_{E_1,\sigma}[C_1]\in A_*(X(\sigma))$$
which satisfies many nice properties such as $\imath_*[X]\virt_\loc=[X]\virt$ and deformation invariance. 
When $X$ is not proper, $[X]\virt$ is not properly supported in general. If $X(\sigma)$ is proper, then we can still define integrals on the cosection localized virtual fundamental class $[X]\virt_\loc$. 

In the subsequent sections, we will show that the virtual structure sheaf $[\sO_X\virt]\in K_0(X)$ is also localized to $X(\sigma)$ by the cosection.

\section{A vanishing result for virtual structure sheaves}\label{S3.1}

In this section, we show that if there is a surjective cosection 
$\sigma:Ob_U\to \sO_U$ on a \DM stack $U$, then the virtual structure sheaf $[\sO_U\virt]\in K_0(U)$ is zero. 

Let $U$ be a \DM stack and   
let $\phi:E_U\to \bbL_U$ be a perfect obstruction theory on $U$ where $\bbL_U=\tau^{\ge -1}L_U$ is the truncated cotangent complex on $U$, i.e. $h^0(\phi)$ is an isomorphism and $h^{-1}(\phi)$ is surjective. By \cite{BeFa}, we have a closed embedding
\[ \fN_U=h^1/h^0(\bbL_U^\vee)\hookrightarrow h^1/h^0(E_U^\vee)=:\fE_U\]
of the virtual normal sheaf $\fN_U$ into the vector bundle stack $\fE_U$. In this section, we make following assumptions.

\begin{assu}\label{3.1}
(1) There is a surjective homomorphism $\sigma_U:Ob_U\to \sO_U$ where $Ob_U=h^1(E^\vee_U)$ is the obstruction sheaf.\\
(2) There is a 2-term complex $[E^{-1}\to E^0]$ of locally free sheaves on $U$ which is isomorphic to $E_U$ in the derived category $D(\sO_U)$. 
\end{assu}

Let $E_i=(E^{-i})^\vee$ be the dual bundle of $E^{-i}$ for $i=0,1$ so that $E^\vee_U\cong [E_0\to E_1]$. Then $\fE_U=h^1/h^0(E^\vee_U)$ is the quotient stack $[E_1/E_0]$. 
Since the abelian hull of the intrinsic normal cone $\bfc_U$ is the intrinsic normal sheaf $\fN_U$, we have an embedding
$$\bfc_U\hookrightarrow \fN_U\hookrightarrow \fE_U$$
uniquely determined by the perfect obstruction theory $\phi$.
The natural homomorphism $E_1\to \fE_U=[E_1/E_0]$ induces the Cartesian square
\[\xymatrix{
C_1\ar@{^(->}[r] \ar[d] &E_1\ar[d]\\
\bfc_U\ar@{^(->}[r] &\fE_U .
}\]
Since $Ob_U$ is the cokernel of $E^\vee=[E_0\to E_1]$, we have a surjective homomorphism
\beq\label{3.5} E_1\lra Ob_U\mapright{\sigma_U} \sO_U .\eeq
By \cite[\S4]{KLc}, $C_1$ has support in the subbundle 
$$E'_1=\mathrm{ker}(E_1\twoheadrightarrow \sO_U).$$
By Proposition \ref{n1},  we may think of $\sO_{C_1}$ as a sheaf on $E_1'$ because 
\beq\label{3.60} [\sO_{C_1}]\in K_0(E_1').\eeq

For the computation of $\sO\virt_U$, we may use the Koszul resolution of $\sO_U$ as an $\sO_{E_1}$-module. If we denote the vector bundle projection of $E_1$ by $\pi:E_1\to U$, the tautological section $\sO_{E_1}\to \pi^*E_1$ induces a complex
\[
0\lra \wedge^r\pi^*E_1^\vee \lra \cdots \lra \wedge^2\pi^*E_1^\vee \lra \pi^*E_1^\vee \lra \sO_{E_1}\lra 0
\]
where $r=\mathrm{rank}(E_1)$. As is well known, this complex is a locally free resolution of $\sO_U$ on $E_1$, and $\mathrm{Tor}^{E_1}_i(\sO_{C_1}, \sO_U)$ is the $(-i)$-th cohomology of the complex
\[
0\to \wedge^r\pi^*E_1^\vee\otimes_{\sO_{E_1}}\sO_{C_1} \to \cdots \to \wedge^1\pi^*E_1^\vee\otimes_{\sO_{E_1}}\sO_{C_1} 
\to \sO_{C_1}\to 0.
\]
In this section, we prove the following.
\begin{prop}\label{3.2}
Let $\sigma_U:Ob_U\to \sO_U$ be a surjective homomorphism. Then the virtual structure sheaf $[\sO_U\virt]\in K_0(U)$ is zero. 
\end{prop}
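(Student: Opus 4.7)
The plan is to exploit the fact that the surjection $E_1\twoheadrightarrow \sO_U$ from \eqref{3.5} corresponds to a nowhere-vanishing section $s\in H^0(U,E_1^\vee)$, which in turn defines a linear function $\pi_\sigma\in \Gamma(E_1,\sO_{E_1})$ whose vanishing locus is exactly $E_1'$. Since $s$ splits off a line subbundle of $E_1^\vee$, locally $E_1\cong E_1'\oplus \sO_U$ with $\pi_\sigma$ a coordinate on the trivial factor; in particular $\pi_\sigma$ is a non-zero divisor on $\sO_{E_1}$, and the two-term Koszul complex $[\sO_{E_1}\xrightarrow{\pi_\sigma}\sO_{E_1}]$ is a locally free resolution of $\sO_{E_1'}$ over $\sO_{E_1}$.

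The next step is to reduce to sheaves on $E_1'$. Since $C_1$ has reduced support in $E_1'$, the ideal $J:=\pi_\sigma\cdot \sO_{C_1}\subset\sO_{C_1}$ is nilpotent. Filtering $\sO_{C_1}$ by powers of $J$ exactly as in the proof of Proposition \ref{n1} gives $[\sO_{C_1}]=\sum_{r\ge 0}[J^r/J^{r+1}]$ in $K_0(E_1)$, with each $G_r:=J^r/J^{r+1}$ annihilated by $\pi_\sigma$ and therefore a coherent sheaf on $E_1'$ --- this is precisely the content of \eqref{3.60}. By additivity of derived tensor products under short exact sequences, it suffices to prove that $[\sO_U\otimes^L_{\sO_{E_1}}G_r]=0$ in $K_0(U)$ for every such $G_r$.

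For this vanishing, factor
$$\sO_U\otimes^L_{\sO_{E_1}}G_r \;=\; \bl \sO_U\otimes^L_{\sO_{E_1}}\sO_{E_1'}\br\otimes^L_{\sO_{E_1'}}G_r.$$
The first factor is computed by tensoring the Koszul resolution $[\sO_{E_1}\xrightarrow{\pi_\sigma}\sO_{E_1}]$ with $\sO_U$, yielding the two-term complex $[\sO_U\xrightarrow{\pi_\sigma|_U}\sO_U]$. Because $\pi_\sigma$ is a linear function on the vector bundle $E_1$, its restriction to the zero section $U$ is zero; the differential vanishes, the complex splits as $\sO_U\oplus \sO_U[1]$ in the derived category, and its class in $K_0(U)$ is $[\sO_U]-[\sO_U]=0$. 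Tensoring over $\sO_{E_1'}$ with $G_r$ therefore gives $[\sO_U\otimes^L_{\sO_{E_1'}}G_r]-[\sO_U\otimes^L_{\sO_{E_1'}}G_r]=0$ in $K_0(U)$, and summing over $r$ yields $[\sO_U\virt]=0$.

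The one technical point that needs checking is that $\pi_\sigma$ is genuinely a non-zero divisor on $\sO_{E_1}$ and that the Koszul complex is a bona fide resolution; this is precisely where the surjectivity of $\sigma_U$ is essential, since it is surjectivity that makes $s$ nowhere-vanishing and hence cuts out $E_1'$ as a regularly embedded Cartier divisor. Everything else is routine additivity of classes in $K_0$ and standard manipulation of derived tensor products. When $\sigma_U$ is only a cosection, $s$ may vanish on $X(\sigma)$ and this simple argument collapses there --- which is exactly the difficulty addressed by the localized Gysin map constructed in the subsequent sections.
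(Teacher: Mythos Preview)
Your proof is correct. It is essentially the argument recorded in the paper as Remark~\ref{3.rt}, attributed to Richard Thomas: since $E_1'\hookrightarrow E_1$ is a Cartier divisor with trivial normal bundle, for any sheaf $\alpha$ on $E_1'$ one has $L\imath^*\imath_*\alpha\simeq \alpha\oplus\alpha[1]$, hence $[L\imath^*\imath_*\alpha]=0$ in $K_0(E_1')$, and the vanishing of $[\sO_U\virt]$ follows. Your write-up makes this explicit via the two-term Koszul resolution of $\sO_{E_1'}$ and the associativity $\sO_U\otimes^L_{\sO_{E_1}}G_r\cong(\sO_U\otimes^L_{\sO_{E_1}}\sO_{E_1'})\otimes^L_{\sO_{E_1'}}G_r$, together with the filtration of $\sO_{C_1}$ by sheaves on $E_1'$.

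The paper's own proof takes a different route: it uses the short exact sequences $0\to\wedge^{i-1}{E_1'}^\vee\to\wedge^i E_1^\vee\to\wedge^i{E_1'}^\vee\to 0$ to obtain a short exact sequence of Koszul-type complexes after tensoring with $\sO_{C_1}$, and then reads off the vanishing from the associated long exact sequence of $\Tor$ groups. Both approaches ultimately hinge on the same fact (that $E_1/E_1'\cong\sO_U$), but yours isolates the codimension-one geometry of $E_1'\subset E_1$ directly, whereas the paper's argument stays at the level of explicit Koszul complexes and works entirely with sheaves on $U$. One minor point: your claim that the complex $[\sO_U\xrightarrow{0}\sO_U]$ splits in the derived category of $\sO_{E_1'}$ is true but not needed---the exact triangle alone already gives $[\sO_U\otimes^L_{\sO_{E_1'}}G_r]-[\sO_U\otimes^L_{\sO_{E_1'}}G_r]=0$ in $K_0$, which is all you use.
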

\begin{proof}
Let $E_1'$ be the kernel of the surjective homomorphism \eqref{3.5} so that we have an exact sequence of vector bundles
\[ 0\lra E_1'\lra E_1\lra \sO_U\lra 0 \]
which induces an exact sequence of vector bundles
\beq\label{3.6} 0\lra \wedge^{i-1}{E'_1}^\vee \lra \wedge^{i}E_1^\vee \lra \wedge^{i}{E'_1}^\vee \lra 0 \eeq
for each $i$, 
where the superscript $\vee$ denotes the dual vector bundle. 
Recall that the normal cone $C_1$ has support in $E_1'$ by \cite{KLc}. 
Pulling back \eqref{3.6} by $\pi$ and tensoring with $\sO_{C_1}$ give us a short exact sequence
\beq\label{3.3}
 0\lra \wedge^{\cdot-1}\pi^*{E'_1}^\vee\otimes_{\sO_{E_1}}\sO_{C_1} \lra \wedge^{\cdot}\pi^*E_1^\vee\otimes_{\sO_{E_1}}\sO_{C_1} \lra \wedge^{\cdot}\pi^*{E'_1}^\vee\otimes_{\sO_{E_1}}\sO_{C_1} \lra 0
\eeq
of complexes of coherent sheaves on $C_1\subset E_1'$. Then we have 
$$\wedge^\cdot\pi^*{E_1'}^\vee\otimes_{\sO_{E_1}}\sO_{C_1}\cong (\wedge^\cdot \pi^*{E_1'}^\vee\otimes_{\sO_{E_1}}\sO_{E_1'})\otimes_{\sO_{E_1'}}\sO_{C_1}=\wedge^\cdot {\pi'}^*{E_1'}^\vee\otimes_{\sO_{E_1'}}\sO_{C_1}$$
where $\pi':E_1'\to U$ is the bundle projection. 
Let $$A_{j}=\mathrm{Tor}^{E_1}_j(\sO_U,\sO_{C_1}) \and A'_{j}=\mathrm{Tor}_j^{E'_1}(\sO_U,\sO_{C_1})$$
so that 
\beq\label{3.7} \sO\virt_U=\sum_j (-1)^j A_{j} .\eeq
The long exact sequence associated to \eqref{3.3} gives us the exact sequence  
\beq\label{3.4}
0\lra A_{r}\lra A'_{r}\lra A'_{r-1}\lra \cdots \lra A'_{0}\lra A_{1}\lra A'_{1}\lra 0
\eeq
and an isomorphism $A_{0}=A'_{0}$ which imply
$$\sum_j (-1)^jA_{j}=\sum_j (-1)^jA'_{j-1}+\sum_j (-1)^jA'_{j}=0.$$
Therefore we have the vanishing
\beq\label{3.8}
[\sO\virt_U]=\sum_j (-1)^jA_{j}=0\in K_0(U).
\eeq
This proves the proposition.
\end{proof}

\begin{rema}\label{3.rt}
Richard Thomas kindly informed us of a simple proof of Proposition \ref{3.2} as follows.
The normal bundle $N$ of $E_1'$ in $E_1$ is the trivial line bundle. Letting $\imath:E'\to E$ 
denote the inclusion, for a coherent sheaf $\alpha$ on $E_1'$  like $\sO_{C_1}$, the exact triangle
$$N^\vee\otimes\alpha[-1]\to L\imath^*\imath_*\alpha\to \alpha \to N^\vee\otimes\alpha$$
shows that $[L\imath^*\imath_*\alpha]=[\sO_{E_1'}\otimes^L_{\sO_{E_1}}\alpha]=0\in K_0(E_1')$. 
This certainly suffices for the vanishing $[\sO_U\virt]=[\sO_U\otimes^L_{\sO_{E_1}}\sO_{C_1}]=0.$
\end{rema}

In many natural situations, the cosection $\sigma$ is not surjective. 
Let $X(\sigma)$ be the zero locus of a cosection $\sigma:Ob_X\to \sO_X$ of the obstruction sheaf $Ob_X$ on a \DM stack $X$.
Then $\sigma$ is surjective on $U=X-X(\sigma)$ and hence we have the vanishing
$$\jmath^*[\sO_X\virt]=[\sO_U\virt]=0\in K_0(U)$$
where $\jmath:U\to X$ denotes the inclusion. By \eqref{n7}, when $X$ has the resolution property, we should have
$$\imath_*(\xi)=[\sO_X\virt],\quad \exists \xi\in K_0(X(\sigma))$$
where $\imath:X(\sigma)\to X$ is the inclusion. 
In the subsequent sections, we will show that the virtual structure sheaf $[\sO_X\virt]$ localizes to the substack $X(\sigma)$ in a canonical manner. 


\section{Cosection localized Gysin map} \label{SclG}

Let $X$ be a \DM stack and $E_1$ be a vector bundle on $X$. Let 
$$\sigma:E_1\lra \sO_X$$
be a homomorphism of $\sO_X$-modules and $X(\sigma)$ be the zero locus of $\sigma$, i.e. the closed substack defined by the ideal sheaf $\sigma(E_1)\subset \sO_X$.

Let $U=X-X(\sigma)$ and let 
\beq\label{n37} E_1(\sigma):=E_1|_{X(\sigma)}\cup \ker (\sigma:E_1|_U\twoheadrightarrow \sO_U).\eeq
Here we are now considering $E_1$ to be the total space of the vector bundle $E_1$ and likewise $E_1(\sigma)$ is a substack of $E_1$. 

The purpose of this section is to construct the following cosection localized Gysin map.
\begin{theo}\label{n10} Under the notation as above, we have a homomorphism 
\beq\label{n11} 
0^!_{E_1,\sigma}:K_0(E_1(\sigma))\lra K_0(X(\sigma)) \eeq
which satisfies 
\beq\label{n12} \imath_*\circ 0^!_{E_1,\sigma}=0^!_{E_1}\circ\jmath_*:K_0(E_1(\sigma))\lra K_0(X) \eeq
where $\imath:X(\sigma)\to X$ and $\jmath:E_1(\sigma)\to E_1$ denote the inclusion maps while 
\beq\label{n13} 0^!_{E_1}[F]=[\sO_X\otimes^L_{\sO_{E_1}}F]=\sum_i(-1)^i\Tor_i^{E_1}(\sO_X,F).\eeq
\end{theo}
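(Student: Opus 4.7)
The plan is to construct $0^!_{E_1,\sigma}$ on generators of $K_0(E_1(\sigma))$ by exploiting the vanishing result of Proposition \ref{3.2} on the open locus $U$ where $\sigma$ is surjective, together with the fact that the cosection vanishes on $X(\sigma)$, which makes the ordinary zero-section Gysin map $0^!_{E_1|_{X(\sigma)}}:K_0(E_1|_{X(\sigma)})\to K_0(X(\sigma))$ available. The guiding principle, paralleling the cycle-theoretic construction of \cite{KLc} recalled in \S\ref{S2.3}, is that over $U$ the Koszul computation of $\Tor^{E_1}_*(\sO_X,-)$ against sheaves supported in $\ker(\sigma)|_U$ telescopes to zero in $K_0(U)$ (essentially Proposition \ref{3.2}), while over $X(\sigma)$ the ordinary Koszul resolution survives and its zero-section intersection lands in $K_0(X(\sigma))$. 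By Proposition \ref{n5}, it suffices to define $0^!_{E_1,\sigma}[\sO_Z]$ for integral closed substacks $Z\subset E_1(\sigma)$ and to verify that the assignment respects short exact sequences of coherent sheaves.

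I would then treat two cases dictated by the stratification $E_1(\sigma)=E_1|_{X(\sigma)}\cup \ker(\sigma|_U)$. If $Z\subset E_1|_{X(\sigma)}$, the cosection vanishes on $Z$ and I set
\[
0^!_{E_1,\sigma}[\sO_Z]\defeq 0^!_{E_1|_{X(\sigma)}}[\sO_Z]\in K_0(X(\sigma)),
\]
the ordinary Gysin pullback along the zero section of $E_1|_{X(\sigma)}\to X(\sigma)$. If $Z$ meets $U$, then $Z|_U$ lies in the rank-$(r{-}1)$ sub-bundle $E_1'\defeq\ker(\sigma|_U)\subset E_1|_U$, and running the argument of Proposition \ref{3.2} through the short exact sequence
\[
0\lra \wedge^{\cdot-1}\pi^*{E_1'}^\vee\otimes\sO_Z\lra \wedge^\cdot\pi^*E_1^\vee\otimes\sO_Z\lra \wedge^\cdot\pi^*{E_1'}^\vee\otimes\sO_Z\lra 0
\]
of complexes over $U$ yields an explicit alternating sum of $\Tor$'s whose restriction to $K_0(U)$ cancels against itself, exactly as in \eqref{3.4}. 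What remains is a residual combination of sheaves whose support meets $U$ trivially and therefore sits in $K_0(X(\sigma))$; I take its alternating sum to be $0^!_{E_1,\sigma}[\sO_Z]$.

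The main obstacle will be verifying that these generator-wise assignments patch into a well-defined group homomorphism $K_0(E_1(\sigma))\to K_0(X(\sigma))$, i.e.\ respect every short exact sequence of coherent sheaves on $E_1(\sigma)$. This is a diagram chase with the associated long exact $\Tor$ sequences, using Proposition \ref{n1} to reduce to reduced substacks. A subtlety is that Proposition \ref{n6} does not give injectivity of $\imath_*:K_0(X(\sigma))\to K_0(X)$, so the canonical choice of lift in the second case must be pinned down by the explicit $\Tor$-formula rather than by abstract lifting. Once well-definedness is in hand, the compatibility \eqref{n12} follows by construction: pushing the residual terms forward by $\imath_*$ and combining with the stratum-one contribution along $E_1|_{X(\sigma)}\hookrightarrow E_1$ reassembles, via the same short exact sequence above, into $\sum_i(-1)^i[\Tor^{E_1}_i(\sO_X,\jmath_*F)]=0^!_{E_1}\jmath_*[F]$, since the $U$-cancellation built into $0^!_{E_1,\sigma}$ is precisely the cancellation that distinguishes it from the ordinary Gysin map.
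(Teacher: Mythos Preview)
Your proposal has a genuine gap in the second case, where $Z$ meets $U$. The short exact sequence
\[
0\lra \wedge^{\cdot-1}\pi^*{E_1'}^\vee\otimes\sO_Z\lra \wedge^\cdot\pi^*E_1^\vee\otimes\sO_Z\lra \wedge^\cdot\pi^*{E_1'}^\vee\otimes\sO_Z\lra 0
\]
is only defined over $U$, because $E_1'=\ker(\sigma)$ is a sub-\emph{bundle} only over $U$; over $X(\sigma)$ the map $\sigma$ vanishes and the kernel jumps to all of $E_1$. Consequently the Tor sheaves $A'_j=\Tor_j^{E_1'}(\sO_U,\sO_{Z|_U})$ and the telescoping long exact sequence \eqref{3.4} live entirely in $K_0(U)$, and there is no ``residual combination of sheaves whose support meets $U$ trivially'' produced by that argument. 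What you actually know is that the global class $\sum_i(-1)^i[\Tor_i^{E_1}(\sO_X,\sO_Z)]\in K_0(X)$ restricts to zero in $K_0(U)$; but, as you yourself observe, Proposition~\ref{n6} does not provide a canonical lift to $K_0(X(\sigma))$, and your ``explicit Tor-formula'' does not pin one down because the individual $\Tor_i^{E_1}(\sO_X,\sO_Z)$ are sheaves supported throughout $X$, not on $X(\sigma)$.

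The paper resolves this precisely by \emph{forcing $E_1'$ to become a global sub-bundle}: one blows up $X$ along $X(\sigma)$ to get $\rho:\tX\to X$ with exceptional divisor $D$, so that $\tsi:\tE_1=\rho^*E_1\to\sO_{\tX}(-D)$ is surjective everywhere and its kernel $\tE_1'$ is locally free on all of $\tX$. For any coherent $F$ on $E_1(\sigma)$ one chooses a quotient $\tF$ of $\trho^*F$ on $\tE_1'$ agreeing with $F$ over $U$, and sets
\[
0^!_{E_1,\sigma}[F]=\rho'_*\bigl(D^\vee\cdot 0^!_{\tE_1'}[\tF]\bigr)+0^!_{E_1|_{X(\sigma)}}\bigl([F]-[R\trho_*\tF]\bigr),
\]
where $D^\vee\cdot$ is tensoring with $[\sO_{\tX}\to\sO_{\tX}(D)]$ and lands in $K_0(D)$. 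The blowup is the key missing idea: it makes the Gysin map $0^!_{\tE_1'}$ globally defined, and the factor $D^\vee$ both localizes the answer to $D$ (hence to $X(\sigma)$ after $\rho'_*$) and reassembles with the correction term to give $0^!_{E_1}\jmath_*[F]$ after pushing forward by $\imath_*$. The bulk of the proof is then checking independence of the choice of $\tF$ and additivity on short exact sequences, both of which are concrete diagram chases with the long exact sequence of $R\trho_*$.
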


\def\tX{\tilde{X} }
\def\him{\hat{\imath} }
\begin{proof}
To motivate, let us assume that $X(\sigma)$ is a Cartier divisor $D$ so that we have an exact sequence
$$ 0\lra E_1'\lra E_1\mapright{\sigma} \sO_X(-D)\lra 0$$
of locally free sheaves. For any coherent sheaf $F$ on $E_1(\sigma)$, since $E_1'$ is a Cartier divisor of $E_1(\sigma)$, we have an exact sequence
$$0\lra G\lra F\lra \tF=F|_{E_1'}\lra 0$$
of coherent sheaves on $E_1(\sigma)$ with $[G]\in K_0(E|_{X(\sigma)})$ so that $[F]=[\tF]+[G]$. We define
$$0^!_{E_1,\sigma}[F]=D^\vee\cdot 0^!_{E_1'}[\tF]+0^!_{E|_D}[G]\in K_0(X(\sigma))$$
where $D^\vee\cdot :K_0(X)\to K_0(D)$ is defined by 
\beq\label{z1} D^\vee\cdot [F'] = [\sO_D^\vee\otimes^L_{\sO_{X}}F']=\sum_i (-1)^i [\Tor^{X}_i(\sO_D^\vee,F')]\eeq
with $\sO_D^\vee=[\sO_X(-D)\to \sO_X]^\vee=[\sO_X\to \sO_X(D)]$. 

When $X(\sigma)$ is not a Cartier divisor, we 
let $\rho:\tX\to X$ be the blowup of $X$ along $X(\sigma)$ so that we have a surjective homomorphism
\beq\label{n14} \tsi:\tE_1\lra \sO_{\tX}(-D)\eeq
where $\tE_1=\rho^*E_1$ and $D$ is the exceptional divisor. 
Let $\tE_1'$ be the kernel of $\tsi$. Let $\rho':D\to X(\sigma)$ denote the restriction of $\rho$ to $D$ and $\trho:\tE_1'\to E_1(\sigma)$ be the map induced by the projection 
$$\tE_1=E_1\times_X\tX\lra E_1.$$
Let $\trho':\tE'_1|_D\to E_1|_{X(\sigma)}$ denote the restriction of $\trho$ to $\tE_1|_D$ and let 
$$\hat{\imath}:E_1|_{X(\sigma)}\to E_1(\sigma) \and \tilde{\imath}':\tE'_1|_D\to \tE'_1$$ 
denote the inclusion maps. 

For any coherent sheaf $F$ on $E_1(\sigma)$, let $\tF$ be any coherent sheaf on $\tE'_1$ such that there is an epimorphism
\beq\label{n15} \trho^*F\lra \tF\eeq
and that 
$$\tF|_{\tE'_1|_{\tX-D}}=F|_{\ker(\sigma|_U)}$$
where we identified $\tE'_1|_{\tX-D}$ with $\ker(\sigma|_U)$. 
For instance, we may choose $\tF=\trho^*F$.

By adjunction together with \eqref{n15}, we have homomorphisms
$$\eta_{F,\tF}:F\lra \trho_*\trho^*F\lra \trho_*\tF.$$
Since $F$, $\trho_*\trho^*F$ and $\trho_*\tF$ all coincide over $U$, we have
\[ [\ker\, \eta_{F,\tF}] - [\coker\, \eta_{F,\tF}]\in K_0(E_1|_{X(\sigma)}). \]
Since $\trho$ is an isomorphism over $U$, we have
\[ [R^i\trho_*\tF]\in K_0(E_1|_{X(\sigma)}) \]
for each $i\ge 1$. Let us denote the difference $F-R\trho_*\tF$ by  
\beq\label{n18} R_{F,\tF}:=[\ker\, \eta_{F,\tF}] - [\coker\, \eta_{F,\tF}] - \sum_{i\ge 1} (-1)^i [R^i\trho_*\tF]\in K_0(E_1|_{X(\sigma)}). \eeq

We now define the cosection localized Gysin map by 
\beq\label{n16} 0^!_{E_1,\sigma}[F]:=\rho'_*(D^\vee\cdot 0^!_{\tE'_1}\tF)+0^!_{E_1|_{X(\sigma)}}R_{F,\tF}\ \ \in K_0(X(\sigma))\eeq
where $D^\vee\cdot [F']=[\sO_D^\vee\otimes^L_{\sO_{\tX}}F']$ for $\sO_D^\vee=[\sO_{\tX}\to \sO_{\tX}(D)].$ 
To complete the proof, we will show the following:
\begin{enumerate}
\item[(i)] \eqref{n16} is independent of the choice of $\tF$.
\item[(ii)] If $0\to F'\to F\to F''\to 0$ is an exact sequence of coherent sheaves on $E_1(\sigma)$, then 
\beq\label{n17} 0^!_{E_1,\sigma}[F]=0^!_{E_1,\sigma}[F']+0^!_{E_1,\sigma}[F''].\eeq
\item[(iii)] \eqref{n12} holds.
\end{enumerate}

\def\tim{\tilde{\imath} }
To prove (i), since all $\tF$ are quotients of $\trho^*F$, it suffices to show
\beq\label{n19}
\rho'_*(D^\vee\cdot 0^!_{\tE'_1}\trho^*F)+0^!_{E_1|_{X(\sigma)}}R_{F,\trho^* F}=\rho'_*(D^\vee\cdot 0^!_{\tE'_1}\tF)+0^!_{E_1|_{X(\sigma)}}R_{F,\tF} .\eeq
Let $G$ be the kernel of \eqref{n15} so that we have an exact sequence
$$0\lra \tim'_*G\lra \trho^*F\lra \tF\lra 0$$
with $[G]\in K_0(\tE'_1|_D)$. Then we have
$$\rho'_*(D^\vee\cdot 0^!_{\tE'_1}\trho^*F) - \rho'_*(D^\vee\cdot 0^!_{\tE'_1}\tF) = \rho'_*(D^\vee\cdot 0^!_{\tE'_1}\tim'_*G)$$
\beq\label{n20} = \rho'_*(D^\vee\cdot 0^!_{\tE'_1|_D}G)=\rho'_*( 0^!_{\tE_1|_D}G)=0^!_{E_1|_{X(\sigma)}}(\trho'_*[G]).\eeq
By \eqref{n20}, \eqref{n19} follows once we show 
\beq\label{n21} 0^!_{E_1|_{X(\sigma)}}R_{F,\trho^* F} =0^!_{E_1|_{X(\sigma)}}R_{F,\tF} -0^!_{E_1|_{X(\sigma)}}(\trho'_*[G]).\eeq
Obviously \eqref{n21} follows from 
\beq\label{n22} R_{F,\trho^* F} =R_{F,\tF} -\trho'_*[G]\in K_0(E_1|_{X(\sigma)}).\eeq

To prove \eqref{n22}, we consider the long exact sequence 
\beq\label{n24} 
0\to \trho'_*G\to \trho_*\trho^*F\to \trho_*\tF\to R^1\trho'_*G\to R^1\trho_*\trho^*F\to R^1\trho_*\tF\to \cdots\eeq
and break it into exact sequences as follows:
\beq\label{n25}0\lra \trho'_*G\lra \trho_*\trho^*F\lra  H\lra 0 
\eeq
\beq\label{n26}0\lra H\lra \trho_*\tF\lra L\lra 0\eeq
\beq\label{n27}0\lra L\lra R^1\trho'_*G\lra R^1\trho_*\trho^*F\lra R^1\trho_*\tF\lra \cdots\eeq
Note that \eqref{n27} consists only of sheaves on $E_1|_{X(\sigma)}$ and we have an equality
\beq\label{n28} \sum_{i\ge 1}(-1)^i[R^i\trho_*\trho^*F]=\sum_{i\ge 1}(-1)^i[R^i\trho_*\tF] + \sum_{i\ge 1}(-1)^i[R^i\trho'_*G] +[L]\ \  \in K_0(E_1|_{X(\sigma)}).\eeq

From \eqref{n25}, we have a commutative diagram of exact sequences
$$\xymatrix{
0\ar[r] & 0\ar[d]\ar[r] & F\ar[d]^{\eta_{F,\trho^*F}}\ar[r]^{\mathrm{id}} & F\ar[r]\ar[d]^\nu & 0\\
0\ar[r] & \trho'_*G\ar[r] & \trho_*\trho^*F\ar[r] & H\ar[r] & 0
}$$
which gives an exact sequence
$$0\to \ker(\eta_{F,\trho^*F})\to \ker(\nu)\to \trho'_*G\to \coker(\eta_{F,\trho^*F})\to \coker(\nu)\to 0$$
of coherent sheaves on $E_1|_{X(\sigma)}$. Hence we have
\beq\label{n29}
[\ker(\eta_{F,\trho^*F})]-[\coker(\eta_{F,\trho^*F})]=[\ker \nu]-[\coker \nu] -[\trho'_*G]\ \ \in K_0(E_1|_{X(\sigma)}).
\eeq
From \eqref{n26}, we have a commutative diagram of exact sequences
$$\xymatrix{
0\ar[r] & F\ar[d]^\nu\ar[r]^{\mathrm{id}}  & F\ar[d]^{\eta_{F,\tF}}\ar[r]& 0\ar[r]\ar[d] & 0\\
0\ar[r] & H\ar[r] &\trho_*\tF\ar[r] & L\ar[r] & 0
}$$
which gives an exact sequence
$$0\to \ker \nu\to  \ker(\eta_{F,\tF})\to 0\to \coker \nu\to \coker(\eta_{F,\tF})\to L\to 0$$
of coherent sheaves on $E_1|_{X(\sigma)}$. Hence we have
\beq\label{n30}
[\ker \nu]-[\coker \nu]=[\ker(\eta_{F,\tF})]-[\coker(\eta_{F,\tF})]+[L]\ \ \in K_0(E_1|_{X(\sigma)}).
\eeq
Upon adding \eqref{n29} with \eqref{n30} and subtracting \eqref{n28}, we obtain
\beq\label{n32} R_{F,\trho^*F}=R_{F,\tF}-\trho'_*[G]\ \ \in K_0(E_1|_{X(\sigma)})\eeq
as desired. This proves (i).

Next we prove (ii). Let $0\to F'\to F\to F''\to 0$ be an exact sequence of coherent sheaves on $E_1(\sigma)$. Since $\trho^*$ is right exact, we have an exact sequence
$$\trho^*F'\lra \trho^*F\lra \trho^*F''\lra 0.$$
Let $\tF'$ be the image of the first arrow and let
$$\tF=\trho^*F,\quad \tF''=\trho^*F''$$
so that we have an exact sequence
\beq\label{n31} 0\lra \tF'\lra \tF\lra \tF''\lra 0\eeq
of coherent sheaves on $\tE'_1$. When applied to the long exact sequence
$$0\to \trho_*\tF'\to \trho_*\tF\to \trho_*\tF''\to R^1\trho_*\tF'\to R^1\trho_*\tF\to R^1\trho_*\tF''\to \cdots$$
associated to \eqref{n31}, the same argument that we used above to prove \eqref{n32} gives us the equality
\beq\label{n33} R_{F',\tF'}+R_{F'',\tF''}=R_{F,\tF}\ \ \in K_0(E_1|_{X(\sigma)}).\eeq
On the other hand, \eqref{n31} gives us the equality $[\tF]=[\tF']+[\tF'']\in K_0(\tE'_1)$ and hence
\beq\label{n34} \rho'_*(D^\vee\cdot 0^!_{\tE'_1}\tF)=\rho'_*(D^\vee\cdot 0^!_{\tE'_1}\tF')+\rho'_*(D^\vee\cdot 0^!_{\tE'_1}\tF'')\ \ \in K_0(X(\sigma)).\eeq
Combining \eqref{n33} and \eqref{n34}, we obtain \eqref{n17}.

Finally we prove \eqref{n12}. By its definition \eqref{n18},
$$\hat{\jmath}_*R_{F,\tF}=[F]-\trho_*[\tF]\ \ \in K_0(E_1)$$
where $\hat{\jmath}=\jmath\circ \him : E_1|_{X(\sigma)}\to E_1$ is the inclusion map. Hence
\beq\label{n35} \imath_*0^!_{E_1|_{X(\sigma)}}R_{F,\tF}=0^!_{E_1}\hat{\jmath}_*R_{F,\tF}=0^!_{E_1}[F]-0^!_{E_1}\trho_*[\tF]\ \ \in K_0(X).\eeq
On the other hand, we have
\beq\label{n36} 
\imath_*\rho'_*(D^\vee\cdot 0^!_{\tE'_1}\tF)=\rho_*\tim_*(D^\vee\cdot 0^!_{\tE'_1}\tF)
=\rho_*0^!_{\tE_1}[\tF]=0^!_{E_1}\trho_*[\tF]\eeq
where $\tim:D\to \tX$ is the inclusion. Now \eqref{n12} follows from \eqref{n35}, \eqref{n36} and \eqref{n16}. This completes the proof.
\end{proof}

The following is a basic example. 
\begin{exam}\label{n39} Let $X$ be a smooth variety of dimension $n$ and $E$ be a vector bundle on $X$ of rank $n$. Let $\sigma:E\to\sO_X$ be a cosection whose zero locus $X(\sigma)$ consists of one simple point $p$. Then 
\beq\label{n38} 0^!_{E,\sigma}[\sO_X]=(-1)^n[\sO_p] \ \ \in K_0(\{p\}).\eeq
To see it, let $\rho:\tX\to X$ be the blowup of $X$ at $p$. Let $D\cong \PP^{n-1}$ be the exceptional divisor so that we have an exact sequence
$$0\lra \tE'\lra \rho^*E\lra \sO_{\tX}(-D)\lra 0$$
whose restriction to the exceptional divisor is 
$$0\lra \tE'|_{\PP^{n-1}}\lra \sO_{\PP^{n-1}}^{\oplus n}\lra \sO_{\PP^{n-1}}(1)\lra 0$$
so that $\tE'|_{\PP^{n-1}}=T^\vee_{\PP^{n-1}}(1)$. By the Whitney sum formula, $c_{n-1}(\tE'|_{\PP^{n-1}})=(-1)^{n-1}c_1(\sO_{\PP^{n-1}}(1))^{n-1}$ and hence
$$D^\vee\cdot 0^!_{\tE'}[\sO_{\tX}] 
=(-1)^n[\sO_{pt}].$$
Since $\rho^*\sO_X=\sO_{\tX}$, $\rho_*\rho^*\sO_X=\rho_*\sO_{\tX}=\sO_X$ and $R^i\rho_*\sO_{\tX}=0$ for $i\ge 1$, we have $R_{\sO_X,\sO_{\tX}}=0$ and 
$$0^!_{E,\sigma}[\sO_X]=\rho'_*(D^\vee\cdot 0^!_{\tE'}[\sO_{\tX}])=(-1)^n[\sO_p]\ \ \in K_0(\{p\})$$
by \eqref{n16}.
\end{exam}

\section{Cosection localized virtual structure sheaf}\label{Sclss}

Let $X$ be a \DM stack. Let $\phi:E\to \bbL_X$ be a perfect obstruction theory and 
\[ \sigma:Ob_X=h^1(E^\vee)\lra \sO_X \]
be a homomorphism, called a cosection of the obstruction sheaf $Ob_X$. 
Let $X(\sigma)$ be the zero locus of $\sigma$, i.e. the closed substack of $X$ defined by the ideal sheaf $\sigma(Ob_X)\subset \sO_X$. 

We assume that $E$ admits a global resolution $$[E^{-1}\to E^0]$$ by a 2-term complex of locally free sheaves on $X$. The intrinsic normal cone $\bfc_X$ is canonically embedded into the intrinsic normal sheaf $\fN_X$ which in turn embeds into 
$$\fE_X=h^1/h^0(E^\vee)=[E_1/E_0]$$
by $h^1/h^0(\phi^\vee)$ where $E_i$ is the dual of $E^{-i}$. 
The fiber product
$$\xymatrix{
C_1\ar[r]\ar[d] & E_1\ar[d]\\
\bfc_X \ar[r] & \fE_X }$$
defines a cone $C_1$ in $E_1$ and we have
$$[X]\virt=0^!_{\fE_X}[\bfc_X]=0^!_{E_1}[C_1]\ \ \in A_*(X),$$
$$[\sO\virt_{X}]=0^!_{E_1}[\sO_{C_1}]=[\sO_X\otimes^L_{\sO_{E_1}}\sO_{C_1}]\ \ \in K_0(X).$$

In this section, we prove the following.

\begin{theo}\label{n8}
Suppose a \DM stack $X$ is equipped with a perfect obstruction theory $\phi:E\to \bbL_X$ and a cosection $\sigma:Ob_X\to \sO_X$ whose zero locus is denoted by $X(\sigma)$. Assume $E$ admits a global resolution $[E^{-1}\to E^0]$ by locally free sheaves whose dual is denoted by $[E_0\to E_1]$ and let $C_1=\fC_X\times_{[E_1/E_0]}E_1$ be the lift of the intrinsic normal cone $\fC_X$ to $E_1$, whose support is contained in $E_1(\sigma)$ by \cite[\S4]{KLc}. We define the cosection localized virtual structure sheaf to be
\beq\label{3.43}  [\sO\virt_{X,\loc}]=0^!_{E_1,\underline{\sigma}}[\sO_{C_1}]\in K_0(X(\sigma)) \eeq
where $\underline{\sigma}:E_1\to h^1(E^\vee)=Ob_X\to \sO_X$ is the composition of the quotient map $E_1\to h^1(E^\vee)$ with $\sigma$.  
It satisfies
\beq\label{n9} \imath_*  [\sO\virt_{X,\loc}] = [\sO\virt_X]\in K_0(X) \eeq
where $\imath:X(\sigma)\to X$ denotes the inclusion. 
\end{theo}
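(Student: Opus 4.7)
The plan is straightforward: Theorem \ref{n8} is essentially a one-line corollary of Theorem \ref{n10} together with the support statement for $C_1$ recalled in subsection \ref{S2.3} from \cite[\S4]{KLc}. The real substance of the construction was already carried out in Section \ref{SclG}, so the work here is just to verify that the inputs to the localized Gysin map are legitimate and then to unwind definitions.

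First I would check that \eqref{3.43} actually defines a class in $K_0(X(\sigma))$. Since $0^!_{E_1,\underline{\sigma}}$ has domain $K_0(E_1(\underline{\sigma}))$, one needs $[\sO_{C_1}]$ as an element of that group. By \cite[\S4]{KLc} the (reduced) support of $C_1$ lies in $E_1(\underline{\sigma})$, so although $\sO_{C_1}$ is a priori a sheaf on $E_1$, some power of the ideal cutting out $E_1(\underline{\sigma})$ annihilates $\sO_{C_1}$. Applying the filtration construction in the proof of Proposition \ref{n1} (either directly on a chosen scheme structure, or on the reduced substack) produces a canonical class $[\sO_{C_1}] \in K_0(E_1(\underline{\sigma}))$. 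One should note in passing that $E_1(\underline{\sigma})$ and $E_1(\sigma)$ coincide as substacks: the ideals $\underline{\sigma}(E_1)$ and $\sigma(Ob_X)$ agree in $\sO_X$ because $E_1 \twoheadrightarrow Ob_X$ is surjective, so the two notations refer to the same object, and Theorem \ref{n10} applies verbatim with the cosection $\underline{\sigma}:E_1 \to \sO_X$.

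The pushforward identity \eqref{n9} is then an immediate consequence of the compatibility \eqref{n12} built into the cosection localized Gysin map:
\[
\imath_* [\sO\virt_{X,\loc}] \,=\, \imath_* \circ 0^!_{E_1,\underline{\sigma}}[\sO_{C_1}] \,=\, 0^!_{E_1} \circ \jmath_*[\sO_{C_1}] \,=\, 0^!_{E_1}[\sO_{C_1}] \,=\, [\sO\virt_X],
\]
where $\jmath:E_1(\sigma) \hookrightarrow E_1$ is the inclusion, the third equality records that the $K_0$-class $\jmath_*[\sO_{C_1}]$ is just $[\sO_{C_1}]$ viewed on the ambient bundle, and the last equality is the definition \eqref{2.45} of the virtual structure sheaf.

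The main obstacle was dispatched in Section \ref{SclG}: the delicate step was the well-definedness argument (i) in the proof of Theorem \ref{n10}, which had to reconcile the blowup construction with the choice of the quotient $\trho^*F \twoheadrightarrow \widetilde{F}$. Given that, no new technical difficulty arises in Theorem \ref{n8}; the only sanity check worth mentioning is that the class $[\sO_{C_1}]$ on $E_1(\sigma)$ is insensitive to the thickening chosen on $E_1(\sigma)$, which is exactly the content of Proposition \ref{n1}. This makes $[\sO\virt_{X,\loc}]$ a canonical element of $K_0(X(\sigma))$ lifting $[\sO\virt_X]$, as claimed.
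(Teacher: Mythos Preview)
Your proposal is correct and follows exactly the paper's own argument: the paper's proof of Theorem \ref{n8} is literally the one-line computation
\[
\imath_*[\sO\virt_{X,\loc}]=\imath_*0^!_{E_1,\underline{\sigma}}[\sO_{C_1}]=0^!_{E_1}[\sO_{C_1}]=[\sO\virt_{X}]
\]
obtained from \eqref{n12}, which is precisely your chain of equalities. Your additional remarks on why $[\sO_{C_1}]$ lives in $K_0(E_1(\underline{\sigma}))$ and why $E_1(\underline{\sigma})=E_1(\sigma)$ are helpful sanity checks that the paper leaves implicit.
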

We will further show that the cosection localized virtual structure sheaf is deformation invariant and independent of all the choices such as a resolution $[E^{-1}\to E^0]$ of the perfect obstruction theory on $X$.  
\begin{proof}  
Since $[\sO\virt_X]=0^!_{E_1}[\sO_{C_1}]$, by \eqref{n12}, we have 
$$\imath_*[\sO\virt_{X,\loc}]=\imath_*0^!_{E_1,\underline{\sigma}}[\sO_{C_1}]=0^!_{E_1}[\sO_{C_1}]=[\sO\virt_{X}]\in K_0(X)$$
as desired.
\end{proof}

\begin{prop}\label{3.65}
The cosection localized virtual structure sheaf $[\sO\virt_{X,\loc}]\in K_0(X(\sigma))$ is independent of the choice of a global resolution $[E^{-1}\to E^0]$ of the perfect obstruction theory $E$ of $X$. 
\end{prop}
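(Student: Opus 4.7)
The plan is to reduce the comparison to a standard stabilization. Given two global resolutions $[E_a^{-1}\to E_a^0]$ ($a=1,2$) of the perfect obstruction theory $E$, by the usual argument (find a third resolution mapping quasi-isomorphically to both, or chain together) it suffices to treat the case of a surjective quasi-isomorphism $\alpha:[E^{-1}\to E^0]\to [F^{-1}\to F^0]$ between two-term complexes of locally free sheaves on $X$. The kernel $\ker(\alpha)$ is then an acyclic two-term complex of locally free sheaves, hence of the form $[A\xrightarrow{\id}A]$ for a locally free sheaf $A$. Thus, after relabeling, we may reduce to the single case
\[ F^\bullet = E^\bullet \oplus [A\xrightarrow{\id}A], \]
and we must show the resulting cosection localized virtual structure sheaves agree in $K_0(X(\sigma))$.

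For this stabilization, dualizing gives $F_0 = E_0\oplus A^\vee$ and $F_1 = E_1\oplus A^\vee$, with the identity on the $A^\vee$ summand. Consequently $[F_1/F_0] = [E_1/E_0]=\fE_X$ canonically, and the natural quotient map $F_1\to\fE_X$ factors through the projection $\pi:F_1=E_1\oplus A^\vee\to E_1$. Taking the fiber product with $\bfc_X\hookrightarrow\fE_X$ yields
\[ C_1^F \;=\; \bfc_X\times_{\fE_X}F_1 \;\cong\; C_1^E\times_X A^\vee \;=\; \pi^{-1}(C_1^E). \]
Moreover, because $Ob_X$ is unchanged and the cosection $\sigma$ is the same, the composition $\underline{\sigma}^F:F_1\to Ob_X\to\sO_X$ also factors through $\pi$ as $\underline{\sigma}^F=\underline{\sigma}^E\circ\pi$. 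In particular the zero locus $X(\sigma)$ is the same for both resolutions, and $F_1(\sigma)=\pi^{-1}(E_1(\sigma))$, so $\pi$ restricts to the vector bundle projection $\bar{\pi}:F_1(\sigma)\to E_1(\sigma)$ with fiber $A^\vee$.

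It now remains to verify the functoriality
\[ 0^!_{F_1,\underline{\sigma}^F}\bigl[\sO_{C_1^F}\bigr] \;=\; 0^!_{E_1,\underline{\sigma}^E}\bigl[\sO_{C_1^E}\bigr] \quad\in K_0(X(\sigma)). \]
The idea is to run the construction of Section \ref{SclG} in parallel on $F_1$ and on $E_1$ using the same blowup $\rho:\tX\to X$ of $X$ along $X(\sigma)$. Because $\tilde{F}_1'= \tilde{E}_1'\oplus\rho^*A^\vee$ (the $A^\vee$ summand is untouched by passing to the kernel of $\tilde\sigma$) and $\sO_{C_1^F}$ is the pullback of $\sO_{C_1^E}$ along $\pi$, the Koszul resolution of the zero section of the trivial-fiber bundle $A^\vee$ gives $0^!_{A^\vee}[\sO_{C_1^F}]=[\sO_{C_1^E}]$, and likewise on the residual term $R_{F,\tilde F}$. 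Composing Gysin maps (including the divisorial factor $D^\vee\cdot$, which is unaffected by $A^\vee$) then identifies the two expressions in \eqref{n16}.

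The main obstacle is this last step, namely showing that the cosection localized Gysin map is compatible with the vector bundle pullback along $\pi$; the check amounts to unwinding Definition \eqref{n16} in the presence of the trivial $A^\vee$-direction and observing that every ingredient (the blowup, the exceptional divisor, the choice of $\tilde F$, and the residual correction $R_{F,\tilde F}$) pulls back transparently from $E_1$ to $F_1=E_1\oplus A^\vee$. Once this compatibility is in place, the invariance under stabilization, and hence the independence from the chosen global resolution, follows.
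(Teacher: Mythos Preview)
Your proposal is correct and follows essentially the same approach as the paper. Both arguments reduce, via the standard Behrend--Fantechi trick, to the case of two resolutions related by a termwise surjective quasi-isomorphism, and then verify that the cosection localized Gysin map is compatible with the resulting affine-bundle projection $q:\tE_1\to E_1$; the paper simply asserts this compatibility is ``straightforward'' from the definition of $0^!_{E_1,\sigma}$, while you unpack it explicitly (identifying the kernel as $[A\xrightarrow{\id}A]$ and tracing through the blowup construction of \S\ref{SclG}). One small notational hiccup: after setting up $\alpha:E^\bullet\twoheadrightarrow F^\bullet$ you swap the roles of $E$ and $F$ when writing $F^\bullet=E^\bullet\oplus[A\to A]$, which is momentarily confusing but harmless once ``relabeling'' is noticed.
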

\begin{proof}
By the proof of \cite[Proposition 5.3]{BeFa}, it suffices to consider the case where the two resolutions are related by surjective homomorphisms
$$\xymatrix{
\tE^\vee &[\tE_0\ar[r]\ar@{->>}[d] & \tE_1\ar@{->>}[d]]\\
E^\vee  & [E_0\ar[r] & E_1].
}$$
If we let $C_1\subset E_1$ be the pullback of the intrinsic normal cone $\fC_X\subset E_1/E_0$ of $X$ to $E_1$, then $\tC_1=C_1\times_{E_1}\tE_1$ is the pullback of $\fC_X$ to $\tE_1$. 
Then $[\sO_{\tC_1}]=q^*[\sO_{C_1}]$ where $q:\tE_1\to E_1$ denotes the projection. 
By the definition of $0^!_{E_1,\sigma}$ above, it is straightforward to see that
$$0^!_{E_1,\underline{\sigma}}[\sO_{C_1}]=0^!_{\tE_1,\underline{\tsi}}[\sO_{\tC_1}]$$
where $\underline{\tsi}$ is the composition $\tE_1\to E_1\mapright{\underline{\sigma}}\sO_X.$
\end{proof}

\begin{rema}
Exactly the same holds in the algebraic cobordism group of $X$ instead of the algebraic K-group $K_0(X)$. See \cite{Shen} for a virtual fundamental class in the algebraic cobordism group of $X$.  We have the cosection localized virtual class $[X]\virt_{\Omega,\loc}$ in the algebraic cobordism group of $X(\sigma)$. Details will appear in a subsequent paper.  
\end{rema}

\begin{defi}\label{n40}
Let $X$ be a \DM stack equipped with a perfect obstruction theory $\phi:E\to \bbL_X$ and a cosection $\sigma:Ob_X=h^1(E^\vee)\to \sO_X$. Let $[\sO\virt_{X,\loc}]\in K_0(X(\sigma))$ be  the cosection localized virtual structure sheaf of $X$. Suppose the zero locus $X(\sigma)$ of the cosection $\sigma$ is proper. The cosection localized \emph{virtual Euler characteristic} of a class $\beta\in K^0(X)$ is defined as
\beq\label{n41} \chi\virt_\loc (X,\beta)=\chi(X(\sigma),\beta\cdot\sO\virt_{X,\loc})=\sum_i (-1)^i \dim H^i(X(\sigma),\beta\cdot \sO\virt_{X,\loc}).\eeq
\end{defi}

We next prove that the cosection localized virtual structure sheaf is deformation invariant. 
Let $X\to S$ be a morphism of stacks, where $X$ is a \DM stack and $S$ is a smooth Artin stack. 
Let $v:Z\to W$ be a regular embedding of schemes that fits into a Cartesian square
$$\begin{CD}
Y @>u>> X\\
@VVV @VVV\\
Z @>v>> W
\end{CD}
$$
Suppose we have relative perfect obstruction theories $\phi:E\to \bbL_{X/S}$ and $\phi':E'\to \bbL_{Y/S}$ that fit into a morphism of distinguished triangles
\beq\label{XY}
\begin{CD}
E|_Y @>>> E'@>>> N_{Z/W}^\vee|_Y[1]@>{+1}>>\\
@VVV@VVV@VV{\cong}V\\
\bbL_{X/S}|_Y@>>> \bbL_{Y/S}@>>>\bbL_{Y/X}@>{+1}>>,
\end{CD}
\eeq
where $N_{Z/W}$ is the normal bundle of $Z$ in $W$.

As above, we assume the (relative) perfect obstruction theory $E$ admits a global resolution 
\beq\label{nn1} [E^{-1}\to E^0]\eeq
by locally free sheaves $E^{-1}$ and $E^0$ on $X$. Since $E'$ is the cone of the morphism $N_{Z/W}^\vee|_Y\to E|_Y$, $E'$ has the global resolution
\beq\label{nn2} [E^{-1}|_Y\oplus N_{Z/W}^\vee|_Y\to E^0|_Y].\eeq
From the distinguished triangle $\bbL_S|_X\to \bbL_X\to \bbL_{X/S}\to\cdots$, we find that
\beq\label{nn3} [E^{-1}\to E^0\oplus T_S^\vee|_X]\eeq
is an (absolute) perfect obstruction theory of $X$. Similarly, 
\beq\label{nn4} [E^{-1}|_Y\oplus N_{Z/W}^\vee|_Y\to E^0|_Y\oplus T_S^\vee|_Y]\eeq
is an (absolute) perfect obstruction theory of $Y$. Recall that the (absolute) obstruction sheaf $Ob_X$ of $X$ is the cokernel of the dual of \eqref{nn3} and the obstruction sheaf $Ob_Y$ of $Y$ is that of the dual of \eqref{nn4}. Hence we have an exact sequence
\beq\label{nn5} N_{Z/W}|_Y\lra Ob_{Y}\lra Ob_{X}|_Y\lra 0.\eeq

Let $\sigma:Ob_X\to \sO_X$ be a cosection for $X$ and let
$$\sigma':Ob_Y\twoheadrightarrow Ob_X|_Y\mapright{\sigma|_Y} \sO_Y
$$
be the induced cosection for $Y$.
Let $\sO\virt_{X,\loc}$ (resp. $\sO\virt_{Y,\loc}$) be the cosection localized virtual structure sheaf of $X$ (resp. $Y$) with respect to the perfect obstruction theory \eqref{nn3} of $X$ (resp. \eqref{nn4} of $Y$). 
Let $X(\sigma)$ (resp. $Y(\sigma)$) denote the zero locus of the cosection $\sigma$ and (resp. $\sigma'$). Then by construction, $Y(\sigma)=X(\sigma)\times_XY$ which fits into the Cartesian square 
$$\begin{CD}Y(\sigma) @>t>> X(\sigma)\\
@VVV @VVV\\
Z @>v>> W,
\end{CD}
$$
since $Y=Z\times_WX$. 
We then have the Gysin map $$v^!: K_0(X(\sigma))\to K_0(Y(\sigma))$$ defined by
\beq\label{nn6} v^![\sO_A]=0_{N_{Z/W}}^![ \sO_{C_{Y(\sigma)\cap A/A}}]\in K_0(Y(\sigma)),\quad A\sub X(\sigma).
\eeq
In fact, if we choose a finite locally free resolution $\sO_Z^W$ of $v_*\sO_Z$ on $W$, then the Gysin map $v^!$ is given by the tensor product
\beq\label{nn8}
v^![F]=[\sO_Z^W|_{X(\sigma)}\otimes_{\sO_{X(\sigma)}}F],\quad F\in K_0(X(\sigma))
\eeq
by \cite[Remark 1]{YPLe}.

To avoid discussion about K-theory of Artin stacks, let us assume that there is a smooth morphism $M\to S$ and a closed embedding $X\subset M$ over $S$. 
This is always possible if, for instance, $X\to S$ is quasi-projective.

\begin{prop}\label{3.66}
Under the above assumptions, 
we have $$v^![\sO_{X,\loc}\virt]=[\sO_{Y,\loc}\virt]\in K_0(Y(\sigma)).$$
\end{prop}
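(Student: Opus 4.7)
The plan is to combine the deformation-to-the-normal-cone reduction with a direct comparison of the cone lifts and the cosection localized Gysin maps on the bundle side. First, following \cite{KKP}, I would use the double deformation space $M^\circ_{Z/W}$ to produce an $\mathbb{A}^1$-family interpolating between the given regular embedding $v:Z\hookrightarrow W$ and the inclusion of the zero section in the normal bundle $N:=N_{Z/W}\to Z$. Pulling the whole datum $(X\to W,E,\sigma)$ back along this deformation gives a flat family of Cartesian squares in which the cone lift $C_1\subset E_1$, the cosection localized Gysin map, and the sheaf-theoretic Gysin pullback $v^!$ (given by tensoring with a finite locally free resolution of $v_*\sO_Z$ as in \eqref{nn8}) all commute with specialization. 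Both sides of the asserted equality are thus preserved, reducing the problem to the case $W=N$ with $v$ the zero section.

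In this reduced bundle case, the resolution \eqref{nn4} of the absolute obstruction theory of $Y$ dualizes to $F_1=E_1|_Y\oplus N|_Y$; the induced cosection $\underline{\sigma}'$ factors through the projection to $E_1|_Y$, and the cone lift on $Y$ is $C_1'=C_1|_Y\times_Y N|_Y\subset F_1$, which I would deduce from the distinguished triangle \eqref{XY} and the standard compatibility between intrinsic normal cones under Cartesian diagrams involving a regular embedding. A twofold application of the definition of $0^!_{-,-}$ via the Koszul resolution then yields
\[
0^!_{F_1,\underline{\sigma}'}[\sO_{C_1'}]
=0^!_{E_1|_Y,\underline{\sigma}|_Y}\!\bigl(0^!_{N|_Y}[\sO_{C_1|_Y\times_Y N|_Y}]\bigr)
=0^!_{E_1|_Y,\underline{\sigma}|_Y}[\sO_{C_1|_Y}],
\]
so it remains to prove the commutation $v^!\bigl(0^!_{E_1,\underline{\sigma}}[\sO_{C_1}]\bigr)=0^!_{E_1|_Y,\underline{\sigma}|_Y}[\sO_{C_1|_Y}]$.

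This commutation is where most of the work lies, and it is where I expect the main obstacle. Revisiting the blowup construction of $0^!_{E_1,\underline{\sigma}}$ in Theorem \ref{n10}, let $\rho:\tilde X\to X$ be the blowup along $X(\sigma)$ with exceptional divisor $D$ and set $\tilde Y:=Y\times_X\tilde X$. Since $Y(\sigma)=Y\times_X X(\sigma)=Z\times_W X(\sigma)$ by the Cartesian assumption, $\tilde Y\to Y$ dominates the blowup of $Y$ along $Y(\sigma)$, and one checks that for K-theoretic pushforwards one may work with $\tilde Y$ directly. The projection formula for $R\tilde\rho_*$, flat base change across $Y=Z\times_W X$, and the Koszul description \eqref{nn8} of $v^!$ then allow one to pull back each summand of the defining formula \eqref{n16}---namely $\rho'_*(D^\vee\cdot 0^!_{\tilde E_1'}\tilde F)$ and the error term $R_{F,\tilde F}$ of \eqref{n18}---through $v^!$. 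Careful attention is required to keep track of the support condition \eqref{n37} and the higher $\Tor$ contributions, but combining the resulting compatibilities with the two displayed factorizations completes the proof.
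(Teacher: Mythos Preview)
Your identification of the commutation $v^!\circ 0^!_{E_1,\underline\sigma}=0^!_{E_1|_Y,\underline\sigma'}\circ v^!$ as the main technical ingredient is correct, and your sketch of its proof via the blowup description \eqref{n16} and the tensor formula \eqref{nn8} matches Lemma~\ref{nn14} in the paper. The difficulty lies elsewhere.

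The reduction step is circular. You propose to deform $(Z\hookrightarrow W)$ to $(Z\hookrightarrow N_{Z/W})$ and pull back $(X,E,\sigma)$ along this family, asserting that ``the cone lift $C_1\subset E_1$\ldots commute[s] with specialization.'' But the cone $C_1$ for the fiber $\mathcal X_t$ is built from the intrinsic normal cone $\bfc_{\mathcal X_t}$, which is \emph{not} the restriction of $\bfc_{\mathcal X}$ to the fiber; comparing the two is exactly the content of the functoriality statement you are trying to prove. Concretely, knowing that $[\sO^{\mathrm{vir}}_{\mathcal X_t,\loc}]$ is independent of $t$ is an instance of Proposition~\ref{3.66} with $v$ the inclusion of a point in $\mathbb P^1$. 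The same issue recurs in your bundle case: the claim $C_1'=C_1|_Y\times_Y N|_Y$ asserts that the lifted cone for $Y$ is the restriction of that for $X$ (times $N|_Y$), which again fails in general and is precisely what the cone comparison must supply.

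The paper avoids this circularity by working with a fixed embedding $X\hookrightarrow M$ (with $M\to S$ smooth) and using the double deformation space of \cite{KKP} to produce a \emph{rational equivalence} $[C_{Y/M}]=[C_{Y/C_{X/M}}]$ directly. The point is that this equivalence is a statement about explicit cones inside a fixed ambient bundle, and one checks (as in \cite[\S5]{KLc}) that it takes place inside the kernel of the extended cosection. One then writes $[\sO^{\mathrm{vir}}_{Y,\loc}]$ via $C_{Y/M}$, replaces it by $C_{Y/C_{X/M}}$, observes that $0^!_{N_{Z/W}|_Y}[\sO_{C_{Y/C_{X/M}}}]=v^![\sO_{C_{X/M}}]$ by the very definition \eqref{nn6} of $v^!$, and finally applies Lemma~\ref{nn14}. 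Your outline has the last two moves but is missing the cone comparison; without it, neither your reduction nor your formula for $C_1'$ can be justified.
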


\begin{proof}
The proof essentially follows from that of \cite[Theorem 5.2]{KLc}, using the work of \cite{KKP}.
We outline the arguments. 
By \cite{Fulton}, there is a deformation $\cM\to \PP^1$ whose fibers are $M$ except the central fiber $C_{X/M}$ 
which is the normal cone of $X$ in $M$. In \cite[below (7)]{KKP}, the authors constructed a double deformation space
over $\Po\times\Po$, that is the deformation of $\cM$ to the normal cone 
$C_{Y\times\PP^1/\cM}$ of $Y\times \PP^1$. We denote this double deformation space by $\cW$.

By its construction, $\cW$ is flat over $\Po\times\Po-\{(0,0)\}$, where $(0,0)\in\Po\times\Po$ is the special point
having the following properties: The fiber of $\cW$ over $(1,0)\in \Po\times\Po$ is $C_{Y/M}$;
the flat specialization along $\Po\times\{0\}$ (over $(0,0)$) is $C_{Y/C_{X/M}}$.

Because $X$ is of finite type, an easy argument shows that we can find a smooth birational model
$U\to\Po\times\Po$, isomorphic over $\Po\times\Po-\{(0,0)\}$, so that 
$$\cW\times_{\Po\times\Po}\bl\Po\times\Po-\{(0,0)\}\br \times_{\Po\times\Po}U
$$
extends to a family $\tilde \cW\sub \cW\times_{\Po\times\Po}U$, flat over $U$.
Consequently, we can find a chain $\Sigma=\cup_{i=1}^n \Sigma_i$ of $\Po$'s in $U$,
lying over $\Po\times \{0\}$, and two points $a\in \Sigma_1$ and $b\in\Sigma_n$ so that
$$\tilde\cW|_a=C_{Y/M},\quad \tilde\cW|_b=C_{Y/C_{X/M}}.
$$
We thus obtain a rational equivalence
\beq\label{nn10}
[{C_{Y/M}}]=[{C_{Y/C_{X/M}}}] .
\eeq

Recall that \eqref{nn3} and \eqref{nn4} are perfect obstruction theories for $X$ and $Y$ respectively. Let $E_i$ denote the dual of $E^{-i}$ for $i=0,1$. 
Since the intrinsic normal cone is $\bfc_X=C_{X/M}/T_M|_X$, the cone 
$C_X:=C_1=\bfc_X\times_{\cE_X}E_1$ equals 
$$C_{X/M}\times_X(E_0\oplus T_S|_X)/T_M|_X\subset E_1(\underline{\sigma})$$
where $\underline{\sigma}:E_1\twoheadrightarrow Ob_X\mapright{\sigma} \sO_X$ is the cosection of $E_1$ induced by $\sigma$.  
Hence we have
\beq\label{nn11}[\sO_{X,\loc}\virt]=0^!_{E_1,\underline{\sigma}}[\sO_{C_{X/M}\times_X(E_0\oplus T_S|_X)/T_M|_X}]\in K_0(X(\sigma)).\eeq

By \eqref{nn5}, the restriction $\underline{\sigma}':E_1|_Y\to\sO_Y$ of $\underline{\sigma}$ factors through $\sigma':Ob_Y\to \sO_Y$ defined above. 
By definition, we have the equality $E_1(\underline{\sigma})|_Y=E_1|_Y(\underline{\sigma}')$ and a Cartesian diagram
\[\xymatrix{
E_1(\underline{\sigma})|_Y\ar[r]\ar[d] & E_1(\underline{\sigma})\ar[d]\\
Y\ar[d]\ar[r] & X\ar[d]\\
Z\ar[r]^v & W
}\]
Similarly to the case of $X$ above, the cone $C_Y=\bfc_Y\times_{\cE_Y}(E_1|_Y\oplus N_{Z/W}|_Y)$ equals 
$$C_{Y/M}\times_Y (E_0|_Y\oplus T_S|_Y)/T_{M}|_Y\subset E_1(\underline{\sigma})|_Y\oplus N_{Z/W}|_Y
$$
so that 
\beq\label{nn12} [\sO_{Y,\loc}\virt]=0^!_{E_1|_Y\oplus N_{Z/W}|_Y,\underline{\sigma}'}[\sO_{C_{Y/M}\times_Y (E_0|_Y\oplus T_S|_Y)/T_{M}|_Y}]\eeq
$$
=0^!_{E_1|_Y,\underline{\sigma}'}0^!_{N_{Z/W}|_Y}[\sO_{C_{Y/M}\times_Y (E_0|_Y\oplus T_S|_Y)/T_{M}|_Y}]
$$
since $N_{Z/W}|_Y$ lies in the kernel of $\sigma'$ by \eqref{nn5}.

It was proved in \cite[\S5]{KLc} that the rational equivalence \eqref{nn10} takes place in the kernel of a cosection extended over the double deformation space. Therefore \eqref{nn12} equals 
$$0^!_{E_1|_Y,\underline{\sigma}'}0^!_{N_{Z/W}|_Y}[\sO_{C_{Y/C_{X/M}}\times_Y (E_0|_Y\oplus T_S|_Y)/T_{M}|_Y}]$$
$$=0^!_{E_1|_Y,\underline{\sigma}'}v^![\sO_{C_{X/M}\times_X (E_0\oplus T_S|_X)/T_{M}|_X}].$$
By Lemma \ref{nn14} below, we have $0^!_{E_1|_Y,\underline{\sigma}'}\circ v^! =v^!\circ 0^!_{E_1,\underline{\sigma}}.$ Hence, $ [\sO_{Y,\loc}\virt]$ equals
$$v^!0^!_{E_1,\underline{\sigma}}[\sO_{C_{X/M}\times_X (E_0\oplus T_S|_X)/T_{M}|_X}]=v^![\sO_{X,\loc}\virt]$$
by \eqref{nn11}. 
This proves the proposition.
\end{proof}

It remains to prove the following.
\begin{lemm}\label{nn14}
With the notation as above, $0^!_{E_1|_Y,\underline{\sigma}'}\circ v^! =v^!\circ 0^!_{E_1,\underline{\sigma}}.$
\end{lemm}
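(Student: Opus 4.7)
The strategy is to verify that the Gysin map $v^!$ commutes with every building block of the cosection localized Gysin map $0^!_{E_1,\underline\sigma}$ defined in~\eqref{n16}. By~\eqref{nn8}, $v^!$ is implemented as the derived tensor product with a finite locally free resolution $\sO_Z^W$ of $v_*\sO_Z$, pulled back to the ambient space in question. Since $\sO_Z^W$ is a bounded complex of locally free sheaves, this operation is exact; by the projection formula applied term by term, it commutes with proper pushforwards, and by a standard Tor argument it commutes with Koszul-type Gysin maps.

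Concretely, let $\rho:\tX\to X$ be the blowup of $X$ along $X(\sigma)$ with exceptional divisor $D$, and form the Cartesian square
\[
\xymatrix{
\tilde Y\ar[r]\ar[d]_{\bar\rho} & \tX\ar[d]^{\rho}\\
Y\ar[r] & X,
}
\]
so that $\tilde Y=\tX\times_X Y$ and $\bar D:=D\times_X Y$ is a Cartier divisor on $\tilde Y$ on which the restricted cosection $\underline\sigma'$ becomes principal. The formula~\eqref{n16} with $\bar\rho$ in place of a blowup then computes $0^!_{E_1|_Y,\underline\sigma'}$; an extension of the independence argument in the proof of Theorem~\ref{n10} ensures this matches the canonical definition on the $Y$-side. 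Given a representative $\tF$ on $\tE_1'$ of $F\in K_0(E_1(\underline\sigma))$, let $\bar{\tF}$ be its pullback along the induced map of total spaces. I would then verify, one operation at a time, the identities $v^!\rho'_* = \bar\rho'_*v^!$ (projection formula), $v^!(D^\vee\cdot(-)) = \bar D^\vee\cdot v^!$ (since $D^\vee$ is the two-term locally free complex $[\sO_{\tX}\to\sO_{\tX}(D)]$), $v^!\circ 0^!_{\tE_1'} = 0^!_{\tE_1'\times_{\tX}\tilde Y}\circ v^!$, and $v^! R^i\trho_*\tF = R^i\bar{\trho}_*\bar{\tF}$ for each $i$. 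The last of these, combined with the exact-sequence analysis from the proof of Theorem~\ref{n10}, yields $v^!R_{F,\tF} = R_{v^!F,\bar{\tF}}\in K_0(E_1|_{Y(\sigma)})$. Substituting these identities into~\eqref{n16}, together with $v^!\circ 0^!_{E_1|_{X(\sigma)}} = 0^!_{E_1|_{Y(\sigma)}}\circ v^!$, establishes the lemma.

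\textbf{Main obstacle.} The most delicate step is the derived base change identity $v^!R^i\trho_* = R^i\bar{\trho}_* v^!$, because $\trho$ is not flat (the blowup contracts $D$) and the higher direct images are genuinely derived. It is resolved precisely by the fact that $v^!$ is tensor product with a finite locally free complex: term by term the projection formula $\trho_*(\tF\otimes\trho^*L_i)=\trho_*(\tF)\otimes L_i$ applies, and taking the alternating sum in $K_0$ gives the required derived compatibility. A secondary subtlety is that $\tilde Y$ may not be the blowup of $Y$ along $Y(\sigma)$; this is handled by an independence-from-choice argument analogous to (i) in the proof of Theorem~\ref{n10}, applied to any birational model dominating $Y$ on which $\underline\sigma'$ becomes principal.
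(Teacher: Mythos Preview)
Your approach is essentially the paper's: both rest on the observation from \eqref{nn8} that $v^!$ is derived tensor product with the pullback of a bounded locally free complex $\sO_Z^W$, which therefore commutes with every ingredient of \eqref{n16}. The paper makes this shorter by first reducing to integral generators $[\sO_A]$ via Proposition~\ref{n5}, taking the proper transform $\tilde A\subset\tE_1'$ as the choice of $\tF$, and then simply noting that $\sO_Z^W$ ``can be pulled out'' of each term --- this bypasses your building-block checklist and the explicit $R^i\trho_*$ base-change verification. The secondary subtlety you flag (that $\tX\times_X Y$ need not coincide with the blowup of $Y$ along $Y(\sigma)$) is real and is equally glossed over in the paper's own argument.
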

\begin{proof}
Let $\rho:\tX\to X$ denote the blowup of $X$ along $X(\sigma)$ with exceptional divisor $D$ so that we have an exact sequence 
$$0\lra \tE_1'\lra \tE_1\lra \sO_{\tX}(-D)\lra 0$$
where $\tE_1=\rho^*E_1$. Let $A\subset E_1(\sigma)$ be an integral substack. 
If $A\subset E_1|_{X(\sigma)}$, the lemma simply says $v^!\circ 0^!_{E_1}=0^!_{E_1|_Y}\circ v^!$ which is straightforward since both $v^!$ and $0^!_{E_1}$ are tensor products (cf. \eqref{nn8}). 
If $A\not\subset E_1|_{X(\sigma)}$, let $\tilde{A}\subset \tE_1'$ be the proper transform of $A$ so that
$$0^!_{E_1,\underline{\sigma}}[\sO_A]=\rho'_*(D^\vee\cdot 0^!_{\tE_1'}[\sO_{\tilde{A}}])+0^!_{E_1|_{X(\sigma)}}[\sO_A\to \rho_*\sO_{\tilde{A}}].$$
Since $v^![\sO_A]=[\sO^W_Z|_{E_1(\underline{\sigma})}\otimes \sO_A]$ where $\sO_Z^W$ is a finite locally free resolution of $v_*\sO_Z$ on $W$, we have
$$0^!_{E_1|_Y,\underline{\sigma}'} v^![\sO_A]=\rho'_*(D^\vee\cdot 0^!_{\tE_1'}[\sO^W_Z|_{\tE_1'}\otimes \sO_{\tilde{A}}])+0^!_{E_1|_{X(\sigma)}}[\sO^W_Z|_{E_1|_{X(\sigma)}}\otimes [\sO_A\to \rho_*\sO_{\tilde{A}}]].$$
Since $\sO_Z^W$ is a complex of locally free sheaves on $W$ and the Gysin maps are tensor products, we can pull out  $\sO_Z^W$ to obtain 
$$0^!_{E_1|_Y,\underline{\sigma}'} v^![\sO_A]=\sO^W_Z|_{X(\sigma)}\otimes\rho'_*(D^\vee\cdot 0^!_{\tE_1'}[\sO_{\tilde{A}}])+\sO^W_Z|_{X(\sigma)}\otimes 0^!_{E_1|_{X(\sigma)}}[\sO_A\to \rho_*\sO_{\tilde{A}}]$$
$$=\sO^W_Z|_{X(\sigma)}\otimes 0^!_{E_1,\underline{\sigma}} [\sO_A] =
v^!0^!_{E_1,\underline{\sigma}} [\sO_A]$$
as desired.
\end{proof}

As an application of Proposition \ref{3.66}, we will prove a principle of conservation of numbers for the cosection localized virtual Euler characteristics. Let $t\in \PP^1$ be a closed point and $\pi:X\to \PP^1$ be a morphism of \DM stacks, equipped with a relative perfect obstruction theory 
$$\phi:E\to \bbL_{X/\PP^1}.$$
Let $\sigma:Ob_{X/\PP^1}=h^1(E^\vee)\to \sO_X$ be a cosection of the relative obstruction sheaf whose zero locus is denoted by $X(\sigma)$. We assume that $X(\sigma)$ is proper over $\PP^1$
although $X$ may not be proper over $\PP^1$. 
The restriction 
$$\phi_t:E_t\lra \bbL_{X_t}$$
of $\phi$ to $X_t=t\times_{\PP^1} X$ with $E_t=E|_{X_t}$ is a perfect obstruction theory of $X_t$ and we have an absolute perfect obstruction theory
$$\bar\phi:\bar E\lra \bbL_X$$
of $X$ defined by the commutative diagram
\beq\label{n42}
\xymatrix{
\sO_X\ar[r]\ar@{=}[d] &\bar E\ar[d]^{\bar\phi} \ar[r] & E\ar[r]\ar[d]^\phi & \\
\pi^*\bbL_{\PP^1}\ar[r] & \bbL_X\ar[r] & \bbL_{X/\PP^1}\ar[r] &
}\eeq
of distinguished triangles. The first row of \eqref{n42} gives us an exact sequence
\beq\label{n43} \sO_X\lra Ob_{X/\PP^1}=h^1(E^\vee)\lra Ob_X=h^1(\bar E^\vee)\lra 0.\eeq
We further assume that the cosection $\sigma:Ob_{X/\PP^1}\to \sO_X$ descends to a cosection
$$\bar\sigma:Ob_X\to \sO_X$$
so that we have the cosection localized virtual structure sheaf $[\sO\virt_{X,\loc}]\in K_0(X(\bar\sigma)).$ 
Likewise, the homomorphism $\sigma_t:Ob_{X_t}\to \sO_{X_t}$ induced by $\sigma$ gives us the cosection localized virtual structure sheaf $[\sO\virt_{X_t,\loc}]\in K_0(X_t(\sigma_t)).$ 

\begin{coro}\label{n45}
Under the above assumptions, for any $\beta\in K^0(X(\bar\sigma))$
\beq\label{z2}\chi\virt_\loc(X_t,\imath_t^*\beta)=\chi(X_t(\sigma_t),\sO_{X_t,\loc}\virt\cdot\imath_t^*\beta)=\chi(X(\bar\sigma),{\imath_t}_*\sO_{X_t,\loc}\virt\cdot \beta)\eeq
is independent of $t\in \PP^1$, where $\imath_t:X_t(\sigma_t)\hookrightarrow X(\bar\sigma)$.
\end{coro}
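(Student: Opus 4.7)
The plan is to apply the deformation invariance Proposition \ref{3.66} to the regular embedding $v:\{t\}\hookrightarrow \PP^1$, and then reduce the $t$-independence to the elementary observation that $[\sO_t]\in K^0(\PP^1)$ does not depend on the closed point $t$.

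First I would set $S=W=\PP^1$, $Z=\{t\}$, so that $Y = X\times_{\PP^1}\{t\}=X_t$. The embedding $v$ is regular with trivial normal bundle, and the compatibility \eqref{XY} between $\phi : E\to \bbL_{X/\PP^1}$ and $\phi_t : E|_{X_t}\to \bbL_{X_t}$ is a direct consequence of base change for the truncated cotangent complex. The exact sequence \eqref{nn5} identifies the cosection induced on $Y$ in the sense of Proposition \ref{3.66} with $\sigma_t$, since $N_{\{t\}/\PP^1}$ lies in the kernel of the induced cosection. Proposition \ref{3.66} then yields
\[
v^![\sO\virt_{X,\loc}] = [\sO\virt_{X_t,\loc}] \quad \text{in } K_0(X_t(\sigma_t)).
\]

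Next I would use the explicit tensor-product description \eqref{nn8} of $v^!$: if $\sO_t^{\PP^1}$ denotes the Koszul resolution $[\sO_{\PP^1}(-t)\to \sO_{\PP^1}]$ of $v_*\sO_{\{t\}}$, then $v^!$ is tensoring by $\pi^*\sO_t^{\PP^1}$ followed by restriction to $X_t(\sigma_t)$. Hence
\[
(\imath_t)_*[\sO\virt_{X_t,\loc}] = \pi^*[\sO_t^{\PP^1}] \cdot [\sO\virt_{X,\loc}] \in K_0(X(\bar\sigma)).
\]
The projection formula for the closed embedding $\imath_t$, combined with the properness of $X(\bar\sigma)$ over $\PP^1$, then gives
\[
\chi(X_t(\sigma_t),\sO\virt_{X_t,\loc}\cdot \imath_t^*\beta) = \chi\bl X(\bar\sigma),\ \pi^*[\sO_t^{\PP^1}]\cdot \sO\virt_{X,\loc}\cdot \beta\br.
\]

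The proof now reduces to the fact that $[\sO_t^{\PP^1}]\in K^0(\PP^1)$ is independent of $t$: every degree $-1$ line bundle $\sO_{\PP^1}(-t)$ is isomorphic to $\sO_{\PP^1}(-1)$, so $[\sO_t^{\PP^1}] = [\sO_{\PP^1}] - [\sO_{\PP^1}(-1)]$ regardless of $t$, which establishes \eqref{z2}. The main obstacle I anticipate is not any of these steps individually, but the bookkeeping needed to check that Proposition \ref{3.66} genuinely applies: one must verify that the absolute perfect obstruction theory and cosection used there (obtained from the relative data $\phi, \sigma$ by adjoining $T_{\PP^1}^\vee$ as in \eqref{nn3}) coincide, up to the equivalence of Proposition \ref{3.65}, with the data $\bar\phi, \bar\sigma$ built in \eqref{n42} and \eqref{n43}, so that both approaches compute the same cosection localized virtual structure sheaf $[\sO\virt_{X,\loc}]$ on $X$.
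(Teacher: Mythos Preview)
Your proposal is correct and follows essentially the same route as the paper's proof: apply Proposition~\ref{3.66} to the regular embedding $\{t\}\hookrightarrow\PP^1$ to get $v^![\sO\virt_{X,\loc}]=[\sO\virt_{X_t,\loc}]$, then observe that $(\imath_t)_*v^!$ is tensoring by $\pi^*[\sO_t]$, which is independent of $t$ since $[\sO_t]\in K_0(\PP^1)$ is. Your version is in fact more explicit than the paper's, which compresses your second and third displayed equations into the single assertion that ``since $[\sO_t]\in K_0(\PP^1)$ is independent of $t$, ${\imath_t}_*t^![\sO_{X,\loc}\virt]$ is independent of $t$''; your unpacking via \eqref{nn8} and the projection formula is exactly the content behind that sentence, and your closing remark about matching the absolute data $(\bar\phi,\bar\sigma)$ of \eqref{n42}--\eqref{n43} with the construction \eqref{nn3} is a legitimate bookkeeping point that the paper leaves implicit.
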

\begin{proof}
By Proposition \ref{3.66}, we have
$$t^![\sO_{X,\loc}\virt]=[\sO_{X_t,\loc}\virt] \ \ \in K_0(X_t(\sigma_t))$$
where $t:\{t\}\to \PP^1$ is the inclusion. Then we have 
$${\imath_t}_*[\sO_{X_t,\loc}\virt] = {\imath_t}_*t^![\sO_{X,\loc}\virt]\ \ \in K_0(X(\bar\sigma)).$$
Since $[\sO_t]\in K_0(\PP^1)$ is independent of $t\in \PP^1$, 
${\imath_t}_*t^![\sO_{X,\loc}\virt]$ is independent of $t$. Hence $$\chi(X(\bar\sigma),{\imath_t}_*\sO_{X_t,\loc}\virt\cdot \beta),\quad \beta\in K^0(X(\bar\sigma))$$ is independent of $t$ as desired.
\end{proof}

Another way to prove Corollary \ref{n45} is to use a cosection localized Riemann-Roch as outlined below. For schemes, we have the following cosection localized version of virtual Grothendieck-Riemann-Roch (cf. \cite[Theorem 3.3]{FaGo}).

\begin{theo}\label{z3} Let $f:X\to Y$ be a morphism of schemes with $Y$ smooth. Let $\phi:E\to\bbL_X$ be a perfect obstruction theory of $X$ and $\sigma:Ob_X\to \sO_X$ be a cosection. Suppose the restriction $f':X(\sigma)\to Y$ of $f$ to the zero locus $X(\sigma)$ of the cosection $\sigma$ is proper. Let $T_X\virt=[E_0]-[E_1]\in K^0(X)$ be the virtual tangent bundle where $[E_0\to E_1]$ is the dual of $E$. 
Then for $\beta\in K^0(X)$, we have
\beq\label{3.47}\ch(f'_*(\beta\cdot\sO\virt_{X,\loc})) \td(T_Y)\cap [Y]=f'_*(\ch(\beta) \td(T_X\virt)\cap [X]\virt_\loc).\eeq
In particular, if $Y$ is a point and $X(\sigma)$ is proper, the cosection localized virtual Euler characteristic of $\beta$ is 
\beq\label{3.48}\chi\virt_\loc(X,\beta)=\int_{[X]\virt_\loc}\ch(\beta)\td(T_X\virt).\eeq
\end{theo}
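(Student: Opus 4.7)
The plan is to adapt the Fantechi--G\"ottsche proof of the virtual Grothendieck--Riemann--Roch theorem \cite[Theorem 3.3]{FaGo} to the cosection localized setting. By Theorem \ref{n8} we have the parallel identities
\[ [\sO\virt_{X,\loc}] = 0^!_{E_1,\underline\sigma}[\sO_{C_1}]\in K_0(X(\sigma)), \qquad [X]\virt_\loc = 0^!_{E_1,\underline\sigma}[C_1]\in A_*(X(\sigma)), \]
where on the right $0^!_{E_1,\underline\sigma}$ denotes the Chow cosection localized Gysin map of \cite{KLc}. The strategy is to establish a cosection localized Riemann--Roch formula for the Gysin map $0^!_{E_1,\underline\sigma}$ itself, from which \eqref{3.47} will follow by combining it with the ordinary Grothendieck--Riemann--Roch theorem applied to the proper morphism $f':X(\sigma)\to Y$.

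The key localized Riemann--Roch formula should assert that, for a coherent sheaf $F$ on $E_1(\underline\sigma)$,
\[ \ch\bigl(0^!_{E_1,\underline\sigma}[F]\bigr)\cdot \td(X(\sigma)) = 0^!_{E_1,\underline\sigma}\bigl(\ch(F)\cdot \td(E_1)|_{E_1(\underline\sigma)}\bigr), \]
with the Chow version of $0^!_{E_1,\underline\sigma}$ on the right. To prove it I would work on the blowup $\rho:\tX\to X$ along $X(\sigma)$ that underlies both the K-theoretic construction in \S\ref{SclG} and its Chow counterpart in \cite{KLc}. On $\tX$ we have the short exact sequence $0\to \tE_1'\to \tE_1\to \sO_{\tX}(-D)\to 0$, giving $\td(\tE_1)=\td(\tE_1')\cdot\td(\sO_{\tX}(-D))$. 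Then the ordinary GRR for the regular embedding $\tX\hookrightarrow \tE_1'$ contributes $\td(\tE_1')$; the ordinary GRR for the Cartier divisor embedding $D\hookrightarrow \tX$ contributes the factor $(1-e^{-D})/D$ coming from $\td(\sO_{\tX}(-D))^{-1}$ and from the operation $D^\vee\cdot$ defined in \eqref{z1}; and the proper pushforward $\rho'_*$ combines these into $\td(E_1)|_{X(\sigma)}$ via the projection formula.

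The residual terms $R_{F,\tF}$ appearing in the definition \eqref{n16} are supported entirely on $E_1|_{X(\sigma)}$, where the cosection vanishes identically and the ordinary (non-localized) GRR applies directly. Assembling the blowup computation with the residual contribution gives the displayed identity. Applying it to $F=\sO_{C_1}\otimes \pi^*\beta$ for $\pi:E_1\to X$ the bundle projection, using $\td(T_X\virt)=\td(E_0)\cdot\td(E_1)^{-1}$ and the fact that the Koszul resolution identifies $0^!_{E_1}$ with derived restriction to the zero section, and then pushing forward along $f':X(\sigma)\to Y$ by ordinary GRR yields \eqref{3.47}. Specializing to $Y=\mathrm{pt}$ gives \eqref{3.48}.

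The main technical obstacle will be the Todd class bookkeeping across the blowup: ensuring that the contribution $\td(\tE_1')$ from the zero section of $\tE_1'$, the exceptional divisor factor arising from $D^\vee\cdot$, and the pushforward $\rho_*$ combine into precisely the single factor $\td(T_X\virt)$ pulled back from $X(\sigma)$, with the exceptional contributions cancelling. Conceptually, the proof amounts to a bivariant Riemann--Roch assertion that $\ch$ is a natural transformation from the K-theoretic cosection localization of \S\ref{SclG} to its Chow counterpart in \cite{KLc}, and the verification reduces to running the two parallel constructions side by side on the blowup $\tX$ and checking Todd-class compatibility term by term.
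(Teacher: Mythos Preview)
Your approach is essentially the same as the paper's: both adapt Fantechi--G\"ottsche by isolating a single Riemann--Roch compatibility for the localized Gysin map, proved on the blowup $\tX$ by splitting into the $\tF$-term and the residual $R_{F,\tF}$-term, with the crucial identity $\ch(\sO_D^\vee)\cdot\td(\sO_{\tX}(-D)) = -D$ (from $\ch(\sO_D^\vee)=1-e^D$) collapsing the exceptional contribution to match the Chow-theoretic $-D\cdot$ of \cite{KLc}.

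One correction to your formulation: the key identity should not be written as
\[
\ch\bigl(0^!_{E_1,\underline\sigma}[F]\bigr)\cdot\td(X(\sigma)) = 0^!_{E_1,\underline\sigma}\bigl(\ch(F)\cdot\td(E_1)\bigr),
\]
because $X(\sigma)$ and $E_1(\underline\sigma)$ are in general singular, so neither $\ch$ on $K_0$ nor $\td(X(\sigma))$ makes sense. The paper states it correctly using the Baum--Fulton--MacPherson transformation $\tau$:
\[
\tau_{X(\sigma)}\bigl(0^!_{E_1,\underline\sigma}[F]\bigr) = \td(-E_1)\cap 0^!_{E_1,\underline\sigma}\bigl(\tau_{E_1(\underline\sigma)}[F]\bigr).
\]
Once you rewrite your target identity in terms of $\tau$, your outlined computation on the blowup goes through exactly as in the paper, and the Todd bookkeeping you flagged as the main obstacle reduces to the single-line computation $(1-e^D)\cdot\dfrac{-D}{1-e^D}=-D$.
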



For a quasi-projective \DM stack $X$, we choose a 
smooth projective \DM stack $M$ and a closed immersion $X\hookrightarrow M$. Then \eqref{z2} is 
\beq\label{n52}\chi\virt_\loc(X_t,\imath_t^*\beta)=\chi(M,\jmath_*{\imath_t}_*\sO\virt_{X_t,\loc}\cdot \beta), \quad \beta\in K^0(M)\eeq
where $\jmath:X(\bar{\sigma})\hookrightarrow X\hookrightarrow M$ is the inclusion. 
Applying the Kawasaki-Riemann-Roch for the smooth \DM stack $M$ together with \eqref{3.47}, we can express the right side of \eqref{n52} in terms of cosection localized virtual integrals on inertia substacks $X_\mu$. Since the cosection localized virtual fundamental classes are deformation invariant by \cite{KLc}, we find that the left side of \eqref{n52} is independent of $t$. 

We complete this section with a proof of \eqref{3.47}.
\begin{proof}[Proof of Theorem \ref{z3}]
The theorem is proved by copying the proof of \cite[Theorem 3.3]{FaGo} line by line, if we replace the ordinary Gysin map $0^!_{E_1}$ by the cosection localized Gysin map $0^!_{E_1,\sigma}$. Perhaps the only nontrivial fact to be checked is the identity
\beq\label{z4} \tau_{X(\sigma)}(0^!_{E_1,\sigma}[F])=\td(-E_1)\cap 0^!_{E_1,\sigma}(\tau_{E_1(\sigma)}[F])\eeq
for a coherent sheaf $F$ on $E_1(\sigma)$. 

Recall that the cosection localized Gysin map for Chow groups 
$$0^!_{E_1,\sigma}:A_*(E_1(\sigma))\to A_*(X(\sigma))$$ is defined in \cite{KLc} by 
\beq\label{z5} 0^!_{E_1,\sigma}(\xi)=\rho'_*(-D\cdot 0^!_{\tE_1'}(\zeta))+0^!_{E_1|_{X(\sigma)}}(\eta)\eeq
whenever $\xi=\trho_*\zeta+\hat{\imath}_*\eta$ for $\zeta\in A_*(\tE'_1)$ and $\eta\in A_*(E_1|_{X(\sigma)})$.
Here $\hat{\imath}:E|_{X(\sigma)}\to E_1(\sigma)$ is the inclusion and $\rho:\tX\to X$ is the blowup of $X$ along $X(\sigma)$ while $\rho':D\to X(\sigma)$ is the restriction of $\rho$ to the exceptional divisor $D$ as in \S\ref{SclG}. 

For any choice of $\tF$ as in \eqref{n15}, by \eqref{n18}, we have $F=R\trho_*\tF+R\hat{\imath}_*R_{F,\tF}$ with $[\tF]\in K_0(\tE'_1)$ and $[R_{F,\tF}]\in K_0(E_1|_{X(\sigma)}).$ Hence, we have 
$$\tau_{E_1(\sigma)}[F]=\tau_{E_1(\sigma)}R\trho_*\tF+\tau_{E_1(\sigma)}R\hat{\imath}_*R_{F,\tF}$$
\beq\label{z6} = \trho_*\tau_{\tE'_1}[\tF]+\hat{\imath}_*\tau_{E_1|_{X(\sigma)}}R_{F,\tF}.\eeq
By \eqref{z5}, we thus have
\beq\label{z7}
0^!_{E_1,\sigma}(\tau_{E_1(\sigma)}[F])=\rho'_*(-D\cdot 0^!_{\tE_1'}(\tau_{\tE'_1}[\tF]))+0^!_{E_1|_{X(\sigma)}}(\tau_{E_1|_{X(\sigma)}}R_{F,\tF}).
\eeq

By \eqref{n16} and \eqref{z7}, we have 
$$\tau_{X(\sigma)} 0^!_{E_1,\sigma}[F]=\tau_{X(\sigma)}\rho'_*(D^\vee\cdot 0^!_{\tE'_1}\tF)+\tau_{X(\sigma)}0^!_{E_1|_{X(\sigma)}}R_{F,\tF}$$
$$=\rho'_*\tau_D(\sO_D^\vee\otimes 0^!_{\tE'_1}\tF)+\td(-E_1) 0^!_{E_1|_{X(\sigma)}}(\tau_{E_1|_{X(\sigma)}}R_{F,\tF})$$
$$=\rho'_* \ch(\sO_D^\vee)\tau_D(0^!_{\tE'_1}\tF)+\td(-E_1) 0^!_{E_1|_{X(\sigma)}}(\tau_{E_1|_{X(\sigma)}}R_{F,\tF})$$
$$=\rho'_* \ch(\sO_D^\vee)\td(-\tE'_1) 0^!_{\tE'_1}(\tau_{\tE'_1}\tF)+\td(-E_1) 0^!_{E_1|_{X(\sigma)}}(\tau_{E_1|_{X(\sigma)}}R_{F,\tF})$$
$$=\td(-E_1) \left(\rho'_* (\ch(\sO_D^\vee)\td(\sO_{\tX}(-D)) 0^!_{\tE'_1}(\tau_{\tE'_1}\tF))+0^!_{E_1|_{X(\sigma)}}(\tau_{E_1|_{X(\sigma)}}R_{F,\tF}) \right)$$
$$=\td(-E_1) \left(\rho'_* (-D\cdot 0^!_{\tE'_1}(\tau_{\tE'_1}\tF))+0^!_{E_1|_{X(\sigma)}}(\tau_{E_1|_{X(\sigma)}}R_{F,\tF}) \right)$$
$$=\td(-E_1) 0^!_{E_1,\sigma}(\tau_{E_1(\sigma)}[F])$$
because $\ch(\sO_D^\vee)\td(\sO_{\tX}(-D))=-D$ as $\ch(\sO_D^\vee)=\ch[\sO_{\tX}\to \sO_{\tX}(D)]=1-e^D$. This proves \eqref{z4} and the theorem.
\end{proof}



\section{Application to K-theoretic Landau-Ginzburg/Calabi-Yau correspondence}\label{SLG}

In this section, we apply the cosection localized virtual structure sheaf to a K-theoretic Landau-Ginzburg/Calabi-Yau correspondence. 

\subsection{K-theoretic FJRW invariant}\label{SLG1}
The cosection localized virtual structure sheaf enables us to define the K-theoretic FJRW invariant for \emph{narrow} sector. To simplify the discussion, we focus on the Fermat quintic case
\beq\label{n55} \sum_{i=1}^5z_i^5 : \CC^5\lra \CC.\eeq

Let $S$ be the moduli space of $5$-spin curves, i.e. triples $(C,L,p)$ of \begin{enumerate}
\item a 1-dimensional projective \DM stack $C$ with at worst nodal singularities whose stabilizer groups at nodes or marked points are $\ZZ_5$ which acts on a node $(zw=0)$ as $\zeta\cdot(z,w)=(\zeta z,\zeta^{-1} w)$ for $\zeta^5=1$,
\item a line bundle $L$ on $C$, and
\item an isomorphism $p:L^5\to \omega_C^{\mathrm{log}}.$
\end{enumerate}
Assume all the marked points are narrow, i.e. the action of the stabilizer group $\ZZ_5$ on the fiber of $L$ at a marked point is not trivial. 

Let $X$ be the \DM stack of quadruples $(C,L,p,x)$ where $(C,L,p)\in S$ and $x\in H^0(L)^{\oplus 5}$. By \cite{CLp, CLL}, there is a perfect obstruction theory on $X$ and a cosection $\sigma:Ob_X\to \sO_X$ whose zero locus $X(\sigma)$ is the proper \DM stack $S$. 
As the relative obstruction theory of $X/S$ can be presented by a two-term complex of vector bundles on
$X$, we can apply Theorem \ref{n8} to obtain the cosection localized virtual structure sheaf
\beq\label{n48} [\sO\virt_{X,\loc}]\in K_0(S).\eeq

To obtain numerical invariants, we take the Euler characteristic.
\begin{defi}\label{n47} The K-theoretic Fan-Jarvis-Ruan-Witten invariant is defined by
$$\chi\virt_{\loc}(X,\beta)=\chi(S,\sO\virt_{X,\loc}\cdot\beta),\quad \beta\in K^0(S).$$
\end{defi}

Since $S$ is a smooth \DM stack, the Kawasaki-Riemann-Roch theorem enables us to express the numerical K-theoretic FJRW invariant as (cosection localized) virtual integrals. 

\medskip

In \cite{Chio}, based on the Polishchuk-Vaintrob construction of Witten's top Chern class \cite{PoVa}, Chiodo constructed a K-theory class $Ke(E^\vee,\tau^\vee)\in K_0(S)$ whose top Chern class $$c_{\mathrm{top}}(E,\tau)=\mathrm{ch}(Ke(E^\vee,\tau^\vee))\mathrm{td}(E)^{-1}\in A_*(S)$$
coincides with the cosection localized virtual fundamental class $[X]\virt_{\loc}$ in $A_*(S)$ by \cite[\S5.2]{CLL} and \cite[Theorem 5.4.1]{Chio}. We have the following comparison result. 
\begin{prop} Chiodo's K-theory class $Ke(E^\vee,\tau^\vee)$ coincides with the cosection localized virtual structure sheaf $[\sO\virt_{X,\loc}]$ in $K_0(S)$. 
\end{prop}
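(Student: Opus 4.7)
My plan is to identify both K-theory classes as outputs of the same Koszul-type construction on $S$, thereby establishing equality in $K_0(S)$.

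First, I would unpack $[\sO\virt_{X,\loc}]$ explicitly in the narrow quintic setting. By \cite{CLp,CLL}, one can choose a global resolution of the relative perfect obstruction theory of $X\to S$ by a two-term complex whose dual $[E_0\to E_1]$ comes from a resolution of $R\pi_*(\cL^{\oplus 5})$ on $S$, pulled back to $X$. Since the critical locus of $W=\sum z_i^5$ is the origin, the Witten equation forces the intrinsic normal cone $\bfc_X$ to be set-theoretically supported on the zero section $S\subset X$, so its lift $C_1\subset E_1$ is supported on $E_1|_S$. Applying the blow-up formula \eqref{n16} to $[\sO_{C_1}]$ produces an explicit K-theory class on $S$ of Koszul type, built from the bundle $E_1|_S$ and the restriction of the cosection $\underline\sigma$.

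Second, I would compare this with Chiodo's construction of $Ke(E^\vee,\tau^\vee)$. Under the natural identification of Chiodo's bundle $E$ with (the dual of) our $E_1|_S$, and of Chiodo's cosection $\tau$ with $\underline\sigma|_S$, his class is represented by a specific Koszul-type complex built from $E$ and $\tau$ via the Polishchuk-Vaintrob algebraic construction of Witten's top Chern class. Matching this complex with the expression arising from our Gysin-map formula would give the desired equality in $K_0(S)$.

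The main obstacle is the precise matching of the two constructions: ours uses an iterated blow-up of $X$ along $X(\sigma)=S$ followed by Gysin push-forward, whereas Chiodo's is the purely algebraic Polishchuk-Vaintrob matrix-factorization construction. A useful intermediate step is to verify equality after applying the Chern character: by Theorem~\ref{z3}, $\ch([\sO\virt_{X,\loc}])\td(T_X\virt|_S)$ equals $[X]\virt_\loc$, while by \cite[\S5.2]{CLL} together with \cite[Theorem~5.4.1]{Chio}, $\ch(Ke(E^\vee,\tau^\vee))\td(E)^{-1}$ also equals $[X]\virt_\loc$. Identifying $T_X\virt|_S=-E$ in $K^0(S)\otimes\QQ$ shows the two classes agree rationally in $K_0(S)\otimes\QQ$. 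The integral lift then requires the direct Koszul-complex identification sketched above, reducing the statement to a purely algebraic comparison on $S$ between the blow-up presentation of $0^!_{E_1,\underline\sigma}$ on $\sO_{C_1}$ and the Polishchuk-Vaintrob Koszul presentation of $Ke(E^\vee,\tau^\vee)$.
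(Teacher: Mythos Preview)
Your proposal has a genuine gap. The Chern-character argument only yields equality in $K_0(S)\otimes\QQ$, and you correctly flag that the integral statement still requires a ``direct Koszul-complex identification'' between the blow-up presentation of $0^!_{E_1,\underline\sigma}[\sO_{C_1}]$ and the Polishchuk--Vaintrob construction. But that identification is precisely the content of the proposition, and you do not supply it; you only restate it as a goal. Matching the iterated blow-up/Gysin formula against Chiodo's algebraic class term by term is not straightforward, and nothing in your outline indicates how to carry it out.

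The paper proceeds differently and avoids this difficulty. Following \cite[Proposition 5.10]{CLL}, it first passes to the blowup $\tX$ of $X$ along $X(\sigma)$, where one has the short exact sequence $0\to \tE_1'\to \tE_1\to \sO_{\tX}(-D)\to 0$. The key move is then to \emph{deform this extension to the split one} $\tE_1'\oplus\sO_{\tX}(-D)$; since both $[\sO\virt_{X,\loc}]$ and Chiodo's class are deformation invariant in the relevant sense, the comparison reduces to the two elementary cases of a zero cosection on $\tE_1'$ and the inclusion $\sO_{\tX}(-D)\hookrightarrow\sO_{\tX}$, each of which is checked directly. This splitting trick is what makes the integral comparison tractable, and it is absent from your plan.
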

\begin{proof}
The same argument as in the proof of \cite[Proposition 5.10]{CLL} also proves this proposition. Indeed, since the pushforward and the localized Gysin map $0^!_{E_1,\sigma}$ commute, 
it suffices to prove the proposition on the blowup $\tX$ of $X$ along $X(\sigma)$ where we have an exact sequence
$$0\lra \tE_1'\lra \tE_1\lra \sO_{\tX}(-D)\lra 0.$$
Here $D$ denotes the exceptional divisor. 
By deforming this exact sequence to the split case $\tE_1=\tE_1'\oplus \sO_{\tX}(-D)$, the proposition is reduced to the case of vanishing cosection $\tE_1'\to 0$ and the case where cosection is the natural inclusion $\sO_{\tX}(-D)\to \sO_{\tX}$. Each of these cases is easy to check. 
\end{proof}

\medskip

\subsection{GSW model for K-theoretic Gromov-Witten invariant}\label{SLG2}

Consider the Fermat quintic Calabi-Yau 3-fold
 $$Y=(\sum_{i=1}^5z_i^5=0) \subset \PP^4.$$
Let $Z$ be the deformation of $\PP^4$ to the normal cone $\sO_Y(5)$ of $Y$ in $\PP^4$, i.e. $Z$ is the complement of the proper transform of $\{0\}\times \PP^4$ in the blowup of $\PP^1\times \PP^4$ along $\{0\}\times Y$. Let
$$p:Z\lra \PP^1\times \PP^4\lra \PP^1$$
denote the composition. Then $Z_t=t\times_{\PP^1}Z$ is $\PP^4$ for $t\ne 0$ and $\sO_Y(5)$ for $t=0$. 

Let $N=\overline{M}_g(Y,d)$ (resp. $M=\overline{M}_g(\PP^4,d)$) be the moduli space of stable maps to $Y$ (resp. $\PP^4$). Let $M^p=\overline{M}_g(\PP^4,d)^p$ denote the moduli space of pairs $(f,p)$ where $(f:C\to \PP^4)\in M$ and $p\in H^0(f^*\sO_{\PP^4}(-5)\otimes\omega_C)$. 

Let $\bar{X}=\overline{M}_g(Z/\PP^1,d)$ be the moduli space of stable maps of genus $g$ and degree $d$ to the fibers of $p$. Let $$\pi:X=\overline{M}_g(Z/\PP^1,d)^p\lra \PP^1$$
denote the moduli space of pairs $(f,p)$ where $(f:C\to Z)\in \bar{X}$ and $p\in H^0(f^*\sO_{\PP^4}(-5)\otimes\omega_C)$ (cf. \cite[\S4.1]{CLp}). 
It is straightforward that $X_t$ is $M^p$ for $t\ne 0$ and the central fiber $X_0$ is the moduli space of triples $(f,s,p)$ where $(f:C\to Y)\in N=\overline{M}_g(Y,d)$, $s\in H^0(f^*\sO_Y(5))$ and $p\in  H^0(f^*\sO_{Y}(-5)\otimes\omega_C)$.
By \cite[\S4]{CLp}, all the assumptions of Proposition \ref{3.66} are satisfied and hence we find that 
$t^![\sO\virt_{X,\loc}]=[\sO\virt_{X_t,\loc}]\in K_0(N)$ for all $t\in \PP^1$ and
$$\chi(N,\sO\virt_{M^p,\loc})=\chi(N, \sO\virt_{X_0,\loc}).$$
Here $X(\sigma)=N\times\PP^1$.  
On the other hand, the proof of \cite[Theorem 5.7]{CLp} together with Example \ref{n39} above proves 
$$\chi(N, \sO\virt_{X_0,\loc})=(-1)^{5d-g+1}\chi(N,\sO\virt_N)=(-1)^{5d-g+1}\chi\virt(N).$$
We therefore proved the following.
\begin{prop}\label{n56} The cosection localized virtual Euler characteristic
$$\chi\virt_{\loc}(\overline{M}_g(\PP^4,d)^p):=\chi(N,\sO\virt_{M^p,\loc})$$ of $M^p$ is equal to the K-theoretic Gromov-Witten invariant $\chi(N,\sO\virt_N)$ of $N$ up to sign by
$$\chi\virt_{\loc}(\overline{M}_g(\PP^4,d)^p)=(-1)^{5d-g+1}\chi\virt(N).$$
\end{prop}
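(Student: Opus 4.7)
The plan is to deploy the conservation-of-numbers principle (Corollary \ref{n45}) to the family $\pi : X \to \PP^1$ that interpolates between $X_t = M^p$ (for $t\ne 0$) and the central fiber $X_0$, and then reduce the central fiber computation to an application of Example \ref{n39}, following the Chow-theoretic template of \cite[Theorem 5.7]{CLp}.

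First I would verify that $\pi : X \to \PP^1$ satisfies the hypotheses of Corollary \ref{n45}: the relative perfect obstruction theory and the relative cosection $\sigma$ with zero locus $X(\sigma) = N\times\PP^1$ were constructed in \cite[\S4]{CLp}, and properness of $N = \overline{M}_g(Y,d)$ forces $X(\sigma)$ to be proper over $\PP^1$. Taking $\beta = 1$ in Corollary \ref{n45} then yields
$$\chi(N,\sO\virt_{X_t,\loc}) = \chi(N,\sO\virt_{X_0,\loc}) \quad \text{for every } t\in\PP^1,$$
and the left side at any $t\ne 0$ equals $\chi\virt_\loc(\overline{M}_g(\PP^4,d)^p)$ by definition.

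Next I would analyze the central fiber. Over $(f:C\to Y) \in N$ the fiber of $X_0\to N$ is the affine space $H^0(f^*\sO_Y(5)) \oplus H^0(f^*\sO_Y(-5)\otimes\omega_C)$, and the cosection is, up to sign, the Serre-duality pairing $s\otimes p\mapsto \int sp$, whose vanishing cuts out the zero section $N\hookrightarrow X_0$. Globalizing over $N$, I would represent the relative obstruction theory of $X_0/N$ by a two-term complex of locally free sheaves, split off the $p$-direction (which lies in $\ker\sigma$, so the localized Gysin map on it agrees with the ordinary Gysin map), and identify the remaining $s$-direction with the setup of Example \ref{n39}: a bundle of virtual rank $\chi(f^*\sO_Y(5)) = 5d+1-g$ over $N$ equipped with a cosection whose zero locus is the zero section. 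Example \ref{n39} then contributes a factor of $(-1)^{5d-g+1}$, while the $p$-direction Koszul complex, combined with $[\sO\virt_N]\in K_0(N)$ from the Calabi-Yau obstruction theory on $N$, absorbs into $\chi\virt(N)$ after taking Euler characteristics.

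The main obstacle is this central fiber computation. Although it is formally parallel to the Chow-theoretic argument of \cite[Theorem 5.7]{CLp}, one must verify that every manipulation used there, namely resolving $Rp_*f^*\sO_Y(5)$ by a locally free two-term complex on $N$, commuting the ordinary and cosection localized Gysin maps for the two directions, and reducing the $s$-direction to Example \ref{n39}, survives in K-theory. This should work because the K-theoretic cosection localized Gysin map of Theorem \ref{n10} is built out of exactly the same formal ingredients (tensor products, pushforwards, pullbacks) as its Chow analogue, and because the compatibility $0^!_{E_1|_Y,\underline{\sigma}'}\circ v^! = v^!\circ 0^!_{E_1,\underline{\sigma}}$ established in Lemma \ref{nn14} supplies the required bookkeeping for swapping localized Gysin maps with restrictions.
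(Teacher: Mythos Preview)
Your proposal is correct and follows essentially the same route as the paper: use deformation invariance over $\PP^1$ (the paper invokes Proposition \ref{3.66} directly, you invoke its immediate consequence Corollary \ref{n45}) to equate the generic and central fiber invariants, then compute the central fiber by adapting the argument of \cite[Theorem 5.7]{CLp} with Example \ref{n39} supplying the sign. Your account of the central fiber computation is in fact more explicit than the paper's, which simply cites \cite[Theorem 5.7]{CLp} and Example \ref{n39} without unpacking them.
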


The Landau-Ginzburg/Calabi-Yau correspondence predicts that the FJRW invariant of the Landau-Ginzburg model \eqref{n55} is equivalent to the GW invariant of the Calabi-Yau 3-fold $Y$ via variable changes, analytic continuations, and symplectic transformations. 
Proposition \ref{n56} may be useful for the  K-theoretic LG/CY correspondence.


\bibliographystyle{amsplain}

\begin{thebibliography}{99}

\bibitem{Behr} K. Behrend. {\em Donaldson-Thomas type invariants via microlocal geometry.}
Ann. of Math. (2) 170 (2009), no. 3, 1307-–1338.

\bibitem{BeFa} K. Behrend and B. Fantechi. {\em The intrinsic normal cone.}  Invent. Math. \textbf{128} (1997), no. 1, 45–-88. 

\bibitem{BoSe} A. Borel and J-P. Serre. {\em  Le théorème de Riemann-Roch.} Bull. Soc. Math. France \textbf{86} (1958) 97--136.


\bibitem{CK} H.-L. Chang and Y.-H. Kiem. \emph{Poincar\'e invariants are Seiberg-Witten invariants.} Geom. Topol. \textbf{17} (2013), no. 2, 1149--1163. 

\bibitem{CKL}  H.-L. Chang, Y.-H. Kiem and J. Li. {\em Torus localization and wall crossing for cosection localized virtual cycles}.  Adv. Math. \textbf{308} (2017), 964--986.

\bibitem{CLp} H.-L. Chang and J. Li. {\em Gromov-Witten invariants of stable maps with fields.}  Int. Math. Res. Not. IMRN 2012, no. 18, 4163–-4217.

\bibitem{CLL} H.-L. Chang, J. Li and W.-P. Li. {\em Witten's top Chern class via cosection localization}. Invent. Math. 200 (2015), no. 3, 1015--1063.

\bibitem{CLLL} H.-L. Chang, J. Li, W.-P. Li and M. C.-C. Liu. {\em Mixed-Spin-P fields of Fermat quintic polynomials.} Preprint, arXiv:1505.07532

\bibitem{Chio} A. Chiodo. {\em The Witten top Chern class via K-theory}. Jour. Alg. Geom. \textbf{15} (2006) 681--707.
\bibitem{Clad} E. Clader. {\em Landau-Ginzburg/Calabi-Yau correspondence for the complete intersections $X_{3,3}$ and $X_{2,2,2,2}$.} Adv. Math. \textbf{307} (2017), 1--52. 

\bibitem{FJR} H. Fan, T. Jarvis and Y. Ruan. {\em The Witten equation, mirror symmetry, and quantum singularity theory.} Ann. of Math. (2) \textbf{178} (2013), no. 1, 1--106.

\bibitem{FaGo} B. Fantechi and L. G\"ottsche. {\em  Riemann-Roch theorems and elliptic genus for virtually smooth schemes.} Geom. Topol. \textbf{14} (2010), no. 1, 83--115.

\bibitem{Fulton} W. Fulton. {\em Intersection theory.} Ergebnisse der Mathematik und ihrer Grenzgebiete. 3. Folge. 2. Springer-Verlag, Berlin, 1998.

\bibitem{GS1} A. Gholampour and A. Sheshmani. {\em Donaldson-Thomas Invariants of 2-Dimensional sheaves inside threefolds and modular forms.} Preprint,  arXiv:1309.0050.

\bibitem{GS2} A. Gholampour and A. Sheshmani. {\em Invariants of pure 2-dimensional sheaves inside threefolds and modular forms.} Preprint, arXiv:1305.1334.

\bibitem{GrPa} T. Graber and R. Pandharipande. {\em Localization of virtual cycles.}  Invent. Math. \textbf{135} (1999), no. 2, 487-–518. 

\bibitem{HLQ} J. Hu, W.-P. Li, Z. Qin. {\em The Gromov-Witten invariants of the Hilbert schemes of points on surfaces with $ p_g> 0$.} Internat. J. Math.\textbf{26} (2015), no. 1, 1550009, 26 pp. 

\bibitem{JiTh} Y. Jiang and R. Thomas. {\em Virtual signed Euler characteristics.} J. Algebraic Geom. \textbf{26} (2017), no. 2, 379--397. 

\bibitem{KLc} Y.-H. Kiem and J. Li. {\em Localizing virtual cycles by cosections.} J. Amer. Math. Soc. \textbf{26} (2013), no. 4, 1025-–1050.

\bibitem{KLp} Y.-H. Kiem and J. Li. {\em Categorification of Donaldson-Thomas invariant via perverse sheaves.} Preprint, arXiv: 1212.6444.

\bibitem{KL1} Y.-H. Kiem and J. Li. {\em  Low degree GW invariants of spin surfaces.} Pure Appl. Math. Q. \textbf{7} (2011), no. 4, Special Issue: In memory of Eckart Viehweg, 1449-–1475.
\bibitem{KL2} Y.-H. Kiem and J. Li. {\em Low degree GW invariants of surfaces II.} Sci. China Math. \textbf{54} (2011), no. 8, 1679–-1706.
\bibitem{KL3} Y.-H. Kiem and J. Li. {\em A wall crossing formula of Donaldson-Thomas invariants without Chern-Simons functional.} Asian J. Math. \textbf{17} (2013), no. 1, 63-–94.
\bibitem{KKP} B. Kim, A. Kresch and T. Pantev. {\em Functoriality in intersection theory and a
conjecture of Cox, Katz, and Lee}. J. Pure Appl. Algebra \textbf{179} (2003), no. 1-2, 127--136.

\bibitem{Klei} S. Kleiman {\em Intersection theory and enumerative geometry: a decade in review.} With the collaboration of Anders Thorup on §3. Proc. Sympos. Pure Math., 46, Part 2, Algebraic geometry, Bowdoin, 1985 (Brunswick, Maine, 1985), 321--370, Amer. Math. Soc., Providence, RI, 1987.
\bibitem{KoTh} M. Kool and R. Thomas. {\em Reduced classes and curve counting on surfaces I: theory.}  Algebr. Geom. \textbf{1} (2014), no. 3, 334--383.

\bibitem{Kres} A. Kresch. {\em Cycle groups for Artin stacks.}  Invent. Math. \textbf{138} (1999), no. 3, 495–-536.

\bibitem{Lang} S. Lang. {\em Algebra.} Revised third edition. Graduate Texts in Mathematics, 211. Springer-Verlag, New York, 2002. xvi+914 pp.
\bibitem{YPLe} Y.P. Lee. {\em Quantum K-theory. I. Foundations.} Duke Math. J. \textbf{121} (2004), no. 3, 389--424. 

\bibitem{LiTi} J. Li and G. Tian. {\em Virtual moduli cycles and Gromov-Witten invariants of algebraic varieties.} J. Amer. Math. Soc. \textbf{11} (1998), no. 1, 119-–174.


\bibitem{Mani} Y. Manin. {\em Lectures in algebraic geometry. Part II: The K-functor in algebraic geometry}. Izdat. Moskov. Univ., Moscow, 1971. 86 pp.


\bibitem{MPT} D. Maulik, R. Pandharipande and R. Thomas. {\em Curves on K3 surfaces and modular forms.
With an appendix by A. Pixton.}
J. Topol. \textbf{3} (2010), no. 4, 937–-996.


\bibitem{PT} R. Pandharipande and R Thomas. {\em The Katz-Klemm-Vafa conjecture for K3 surfaces.} Forum Math. Pi \textbf{4} (2016), e4, 111 pp. 

\bibitem{PoVa} A. Polishchuk and A. Vaintrob. {\em Algebraic construction of Witten's top Chern class.} Contemporary Mathematics, vol. 276, pp. 229?249. American Mathematical Society,
Providence (2001)

\bibitem{Shen} J. Shen. {\em Cobordism invariants of the moduli space of stable pairs.}
J. Lond. Math. Soc. (2) \textbf{94} (2016), no. 2, 427--446. 


\bibitem{Totaro} B. Totaro. {\em The resolution property for schemes and stacks.}  J. Reine Angew. Math. \textbf{577} (2004), 1--22.

\bibitem{Vistoli} A. Vistoli. {\em Intersection theory on algebraic stacks and on their moduli spaces.} Invent. Math. \textbf{97} (1989), no. 3, 613-–670.

\end{thebibliography}

\end{document}